\newtheorem{theorem}{Theorem}
\newtheorem*{theorem*}{Theorem}
\newtheorem*{acknowledgement*}{Acknowledgement}
\newtheorem*{definition*}{Definition}
\newtheorem{lemma}[theorem]{Lemma}
\newtheorem{proposition}[theorem]{Proposition}
\newtheorem{remark}[theorem]{Remark}
\newcommand{\RR}[0]{\mathbb{R}}
\newcommand{\CC}[0]{\mathbb{C}}
\newcommand{\pd}[2]{\frac{\partial #1}{\partial#2}}
\newcommand{\ve}[1]{\mathbf{#1}}
\newcommand{\Rc}[0]{\operatorname{Rc}}
\newcommand{\Rm}[0]{\operatorname{R}}
\newcommand{\dfn}[0]{\doteqdot}
\newcommand{\pdtau}[0]{\pd{}{\tau}}
\newcommand{\abs}[1]{\left\vert#1\right\vert}
\newcommand{\lam}{\lambda}
\newcommand{\rad}{R}
\newcommand{\Ec}{E}
\newcommand{\tf}[0]{\wedge^2 T^*M}
\newcommand{\Pm}[0]{\operatorname{P}}
\newcommand{\Pmt}[0]{\tilde{\Pm}}
\newcommand{\Pmh}[0]{\hat{\Pm}}
\newcommand{\Sm}[0]{\operatorname{S}}
\newcommand{\Fm}[0]{\operatorname{F}}
\newcommand{\Rmt}[0]{\tilde{\Rm}}
\newcommand{\Rmh}[0]{\hat{\Rm}}
\newcommand{\Smt}[0]{\tilde{\Sm}}
\newcommand{\Smh}[0]{\hat{\Sm}}
\newcommand{\Am}[0]{\operatorname{A}}
\newcommand{\Bm}[0]{\operatorname{B}}
\newcommand{\Gm}[0]{\operatorname{G}}
\newcommand{\End}[0]{\operatorname{End}}
\newcommand{\Hc}[0]{\mathcal{H}}
\newcommand{\Kc}[0]{\mathcal{K}}
\title[K\"ahlerity of shrinkers asymptotic to K\"ahler cones]{K\"ahlerity of shrinking gradient Ricci solitons 
asymptotic to K\"ahler cones}
\author{Brett Kotschwar}
\email{kotschwar@asu.edu}
\address{School of Mathematical and Statistical Sciences,
	Arizona State University, Tempe, AZ 85287, USA}
 \date{November 2016}
\thanks{The author was partially supported by Simons Foundation grant \#359335.}
\begin{document}
\begin{abstract}
We prove that a shrinking gradient Ricci soliton which is asymptotic to a K\"ahler cone along some end is itself K\"ahler
on some neighborhood of infinity of that end. When
the shrinker is complete, it is globally K\"ahler. 
\end{abstract}
\maketitle

\section{Introduction} 
Let $M$ be a smooth manifold of dimension $n$. A \emph{shrinking gradient Ricci soliton} structure on $M$ consists of a Riemannian metric $g$
and a smooth function $f$ which together satisfy the equations
\begin{equation}\label{eq:shrinker}
  \Rc(g) + \nabla\nabla f= \frac{1}{2} g, \quad R + |\nabla f|^2 = f.
\end{equation}
The latter equation is a normalization which can be achieved by adding an appropriate constant to $f$
on any connected component of $M$.

Shrinking gradient Ricci solitons (or \emph{shrinkers}) correspond naturally to shrinking self-similar solutions to the Ricci flow, which are both the generalized fixed points
of the equation and the prototypical
models for the behavior of a solution 
in the vicinity of a developing singularity. They have been fully classified in dimensions two \cite{HamiltonSingularities} and three 
\cite{CaoChenZhu}, \cite{HamiltonSingularities}, \cite{Ivey3DSoliton}, \cite{Naber4D}, \cite{NiWallach}, \cite{Perelman2}, \cite{PetersenWylie},
and even a limited extension of this classification to dimensions four and higher would 
help advance the understanding of the long-time behavior of solutions to the equation.

There are indications that the asymptotic behavior of complete noncompact shrinking solitons may be rigid enough to support a 
classification into general structural types. At present, all known examples of complete noncompact shrinkers which are not locally reducible as products
are connected at infinity and smoothly asymptotic to regular cones (see \cite{FeldmanIlmanenKnopf},  \cite{DancerWang}, \cite{Yang}). Recent
results of Munteanu-Wang \cite{MunteanuWangKaehler, MunteanuWangConical, MunteanuWangGRSStructure} suggest that these two types may actually exhaust the possibilities for complete noncompact four-dimensional shrinkers.

In this paper, we consider shrinkers of the second type, whose geometries are asymptotically conical in a sense which we now make precise.
Given a closed Riemannian manifold $(\Sigma, g_{\Sigma})$ of (real) dimension $n-1$ and a fixed $a \in \RR$, let
$E_a = E_a(\Sigma)$ denote the cylinder $(a, \infty) \times \Sigma$
and $g_c = dr^2 + r^2g_{\Sigma}$
the regular conical metric on $E_0$.  Further, for any fixed $\lambda > 0$, let 
$\rho_\lam:E_0\to E_0$ denote the dilation map $\rho_\lam(r,\sigma)\dfn (\lam r, \sigma)$. By an \emph{end} $V\subset (M, g)$, we will mean an unbounded connected component of $M\setminus D$ for some compact $D\subset M$.
\begin{definition*}
\label{def:asymcone} Let $V$ be an end of $(M, g)$.
We say that $(M, g)$ is \emph{asymptotic to $(E_0,g_c)$ along $V$}
if, for some $r > 0$, there is a diffeomorphism $\Phi:\Ec_r\to V$ such that $\lam^{-2}\rho_\lam^*\Phi^\ast g\to g_c$
in $C^2_{\emph{loc}}(E_0, g_c)$ as $\lam \to \infty$.
\end{definition*}

Using the correspondence between shrinkers and self-similar solutions, it is possible to show that any shrinker whose curvature decays quadratically
on an end will be asymptotic to some regular cone on that end in the above sense and that the convergence 
is actually locally smooth
(see, e.g., \cite{KotschwarWangConical} and cf. \cite{ChowLu}). 
Munteanu and Wang \cite{MunteanuWangConical} have shown that any complete shrinker whose Ricci curvature tends to zero at infinity must have quadratic curvature decay
and must therefore be asymptotically conical along each of its ends. In dimension four,
 they have proven that the same is true assuming only that the scalar curvature tends to zero \cite{MunteanuWang4D}. 

The uniqueness theorem proven in \cite{KotschwarWangConical} asserts that if two shrinking gradient Ricci solitons are asymptotic
to the same regular cone along some end of each, then the solitons are themselves isometric on some neighborhood of infinity of these ends. One consequence
of this theorem is that any non-trivial isometry of the cross-section of the cone must be reflected in a non-trivial isometry of the end of the soliton. 
In this paper, we show that the K\"ahlerity of the cone is another feature which is necessarily inherited
by an asymptotic shrinker.

\begin{theorem}\label{thm:kaehlerity}
 Suppose $(M, g, f)$ is a shrinking gradient Ricci soliton asymptotic to the regular cone $(\Ec_0, g_c)$ along the end $V\subset M$.
 If $(\Ec_0, g_c)$ is K\"ahler with complex structure $J_c$, then there is an end $W\subset V$ of $M$
 and a complex structure $J$ defined on $W$ relative to which $g|_W$ is K\"ahler and $(W, J)$ is biholomorphic to $(\Ec_R, J_c)$ 
 for some $R > 0$. If $(M, g)$ is complete, the K\"ahler structure extends to all of $M$.
\end{theorem}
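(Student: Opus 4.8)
The plan is to recast Kählerity as the vanishing of the single tensor $\del J$ for a suitably chosen $g$-orthogonal almost complex structure $J$ on the end, and then to \emph{force} this vanishing by a unique continuation argument anchored at infinity. I would organize the proof into four stages: build a candidate $J$ asymptotic to $J_c$; insert it into the self-similar Ricci flow and derive a closed system of differential inequalities for the tensors measuring the failure of Kählerity; record that these tensors decay at a definite rate at infinity because the cone is Kähler; and apply a Carleman estimate to conclude that they vanish identically on a neighborhood of infinity.

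For the construction, I would pull $J_c$ back through the asymptotic diffeomorphism $\Phi$ to obtain an almost complex structure on $V$ which is $g$-orthogonal up to an error controlled by $\abs{\Phi^\ast g - g_c}_{g_c}$. Since the convergence is $C^2_{\text{loc}}$ and the soliton system \eqref{eq:shrinker} is elliptic in harmonic-type gauges, I expect to bootstrap this into decay of $\Phi^\ast g - g_c$ and its derivatives at a fixed polynomial rate; a polar-decomposition projection then yields an honest $g$-orthogonal almost complex structure $J$ on an end $W\subset V$ with $J\to J_c$, together with its covariant derivatives, as $r\to\infty$. Because $J_c$ is parallel on the Kähler cone, the tensors $\del J$ and the Nijenhuis tensor of $J$ inherit this polynomial decay along $W$.

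Next I would pass to the self-similar solution $g(t)$ of the Ricci flow determined by the shrinker and transport $J$ by the defining diffeomorphisms to a time-dependent family $J(t)$, so that each $(g(t), J(t))$ remains an almost-Hermitian pair. The heart of the matter is to show that $\del J$, augmented by the finitely many auxiliary curvature and torsion quantities needed for closure, satisfies a parabolic differential inequality of the schematic form $\abs{\heat(\del J)}\le C\bigl(\abs{\del J}+\abs{\del^2 J}+\cdots\bigr)$ relative to the drift operator attached to $f$. Showing that this system genuinely closes — that the evolution of $\del J$ feeds back only into quantities already tracked, with no stray derivatives of curvature — is the step I expect to be the main obstacle; it should hinge on the soliton identities $\Rc+\del\del f=\tfrac12 g$ and their contracted second Bianchi consequences, which allow one to trade derivatives of $\Rc$ for the non-Kähler tensors themselves.

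With such a system in hand, the polynomial decay at infinity, set against the rapidly growing weight $f\sim\tfrac14 r^2$, is precisely the input for a Carleman estimate for the weighted heat operator of the kind underlying the uniqueness theorem of \cite{KotschwarWangConical}; this forces $\del J\equiv 0$ on $W$, so that $g|_W$ is Kähler with respect to $J$. The identification $J\to J_c$ at infinity then lets me integrate the structure against the cone: once $g|_W$ is Kähler one checks that $\del f$ is real-holomorphic and asymptotic to the radial field $\tfrac{r}{2}\partial_r$, so its flow together with the $J$-action realizes the dilation family $\rho_\lam$ and exhibits $(W, J)$ as biholomorphic to $(\Ec_R, J_c)$ for some $R>0$. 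Finally, when $(M, g)$ is complete the soliton metric is real-analytic, so the parallel tensor $J$ extends from $W$ by analytic continuation of the overdetermined equation $\del J=0$ — equivalently by parallel transport, which is single-valued because the holonomy already fixes $J$ on $W$ — to a global $g$-parallel complex structure, making all of $M$ Kähler.
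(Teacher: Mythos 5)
Your overall strategy (recast K\"ahlerity as the vanishing of tensors, close a differential system, apply Carleman estimates tied to the self-similar structure) is the right family of ideas, but the step where you anchor the unique continuation is wrong, and it is the crux. You propose to conclude $\nabla J\equiv 0$ from the \emph{polynomial} decay of the non-K\"ahler tensors at spatial infinity, ``set against the weight $f\sim r^2/4$.'' No Carleman estimate can do this: on a shrinker end there are abundant nonvanishing solutions of the relevant elliptic and parabolic systems with polynomial decay (eigenfunctions of the drift Laplacian $\Delta-\nabla f\cdot\nabla$, for instance), and the natural threshold for unique continuation from infinity against the weight $e^{-f}\sim e^{-r_c^2/4}$ is Gaussian decay, far stronger than anything the conical asymptotics supply. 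This is exactly the difficulty that \cite{KotschwarWangConical}, and this paper, sidestep by trading the spatial problem for a temporal one: by Proposition \ref{prop:brf}, the self-similar family $g(\tau)=\tau\Psi_\tau^*\bar g$ extends \emph{smoothly to $\tau=0$ with $g(0)=g_c$ exactly}, so one may take $J(0)=J_c$ and propagate it by the fiberwise ODE \eqref{eq:jev}; the non-K\"ahler quantities then vanish \emph{identically on the slice $\tau=0$}, and the Carleman estimates of \cite{KotschwarWangConical} invoked in Theorem \ref{thm:bu} are backward-uniqueness-in-time estimates anchored on that exact vanishing, not on spatial decay. Relatedly, the closed system is not obtained from $\nabla J$ plus Bianchi/soliton identities, which is where you anticipated (correctly) the main obstacle: the paper instead uses the prolonged holonomy system of \cite{KotschwarRFHolonomy} --- $\Am=\nabla\Pmh$, $\Bm=\nabla\nabla\Pmh$, $\Rmh=\Rm\circ\Pmh$, $\Smh=\nabla\Rm\circ\Pmh$, where $\Pmh$ projects onto the complement of the $\mathfrak{u}(n/2)$-subbundle $\Hc\subset\tf$ --- whose closure (Proposition \ref{prop:sysev}) hinges on the bracket relations $[\Hc,\Hc]\subset\Hc$, $[\Hc,\Kc]\subset\Kc$ to control the term $\Rm^{\#}\circ\Pmh$; the vanishing of $\Rmh$ and $\Am$ is then converted back into $\nabla J=0$ by the pointwise ODE argument of Lemma \ref{lem:jext}. (Your construction could in principle be repaired: under the self-similar rescaling, polynomial spatial decay does become vanishing at $\tau=0$; but your diffeomorphism-transported $J(\tau)$ then fails to satisfy $D_{\tau}J=0$, which is what makes the system close, so one is led back to the paper's setup anyway.)

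The second genuine gap is the global extension. You assert that the parallel-transport extension of $J$ is single-valued ``because the holonomy already fixes $J$ on $W$.'' That does not follow: single-valuedness requires the \emph{full} holonomy group of $(M,g)$ at a basepoint $p\in W$ to fix $J(p)$, and loops in $M$ that are not homotopic into $W$ produce parallel transports uncontrolled by the structure on $W$; real-analyticity only identifies local with \emph{restricted} holonomy, which handles null-homotopic loops. The paper needs real input here (Lemmas \ref{lem:genext} and \ref{lem:globallyk}): it lifts to the universal cover, extends the complex structure there using simple connectivity and Nomizu's theorem \cite{Nomizu}, invokes Munteanu--Wang \cite{MunteanuWangKaehler} to conclude that the (now K\"ahler) cover is connected at infinity, and uses Wylie's finiteness of $\pi_1$ \cite{Wylie} to make the covering proper --- concluding that $M$ is connected at infinity, that the preimage of the end is connected, and hence that $\pi_1(E,p)\to\pi_1(M,p)$ is surjective, which is what kills the potential monodromy. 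Without an argument of this kind, your final step is incomplete.
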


If it is already known that there is a K\"ahler shrinker $(\hat{M}, \hat{g}, \hat{f})$ asymptotic to $(\Ec_0, g_c)$ along some end of $(\hat{M}, \hat{g})$,
the first assertion follows from the uniqueness result in \cite{KotschwarWangConical}. Here, however, the existence of such a K\"ahler competitor $(\hat{M}, \hat{g}, \hat{f})$ is not presumed in advance
and the K\"ahlerity of the soliton $(M, g, f)$ on the end $W$ is established only 
from its asymptotic K\"ahlerity along $V$. The assumptions are entirely local to the end $V$; in particular, $(M, g)$ need not be complete nor have only one end. However, when $(M, g)$ is complete and connected, 
its universal cover $(\tilde{M}, \tilde{g})$ will be globally K\"ahler and, consequently, connected at infinity \cite{MunteanuWangKaehler}.
It follows then that $(M, g)$ is also connected at infinity and that the K\"ahler structure on $W$ must extend to all of $M$.

Our proof of Theorem \ref{thm:kaehlerity} uses a combination of the methods in \cite{KotschwarRFHolonomy} and \cite{KotschwarWangConical}. In \cite{KotschwarWangConical}, it is shown that a shrinker which is asymptotic
to a cone $(E_0, g_c)$ gives rise to a smooth solution $g(t)$ to the Ricci flow on some end $W\subset V$ for $t\in [-1, 0)$
which begins at $g(-1) = g$, evolves self-similarly for $-1 \leq t < 0$, and converges smoothly as $t\nearrow 0$ to a limit metric isometric to $g_c$. 
By this construction, recalled in Proposition \ref{prop:brf} below, the first assertion in Theorem \ref{thm:kaehlerity} is reduced to a problem of ``backward propagation
of K\"ahlerity,'' that is, to the problem of showing that
a solution to the Ricci flow which becomes K\"ahler after some time must have actually been K\"ahler all along. 
This problem was considered in \cite{KotschwarRFHolonomy} within the class of complete solutions of bounded curvature. 

Whereas 
the problem of the propagation of the K\"ahler structure forward in time can, in principle, be 
reduced to the uniqueness of solutions to the Ricci flow by the construction of K\"ahler solutions with the same initial data,
the problem of its propagation backward in time does not seem to admit a correspondingly straightforward reduction to the backward uniqueness of solutions. The 
K\"ahler solutions which one would need to construct to act as competitors are now the solutions to parabolic \emph{terminal-value} problems
which are ill-posed in general.
These obstacles are discussed in some detail in Section 2.3 of \cite{KotschwarRFHolonomy} in the context of the preservation of holonomy along the flow. 

Instead, the problem of backward propagation of restricted holonomy in \cite{KotschwarRFHolonomy} is recast as one of backward uniqueness for a prolonged system
encoding the components of the curvature operator relative to a certain decomposition of the bundle of two-forms on $M$. In the next section, we discuss the specialization of this formulation to the K\"ahler setting. This sets up 
a problem of backward uniqueness which, analytically, turns out to be identical in structure and setting to that considered in \cite{KotschwarWangConical},
and can be resolved by the same argument (the application of appropriate Carleman inequalities) given in that reference. We discuss the application of these results
to the first assertion of Theorem \ref{thm:kaehlerity} in Section 3, and prove the second assertion, concerning the extension of the K\"ahler structure to the entire 
manifold,
with a short synthetic argument in Section 4.

\section{Backward propagation of K\"ahlerity under the Ricci flow}
In this section, we will assume that $g_0$ is a K\"ahler metric on $M$ with complex structure $J_0$ and that
$g = g(\tau)$ is a smooth family of metrics satisfying the \emph{backward Ricci flow}
\begin{equation}\label{eq:brf}
 \pdtau g = 2\Rc(g) 
\end{equation}
on $M\times [0, T]$ with $g(0) = g_0$. We do not assume here that $g(\tau)$
is self-similar or that $(M, g(\tau))$ is complete.

Our aim is to set up a framework to determine when the K\"ahlerity of the metric $g_0$ is transferred to  $g(\tau)$ for $\tau > 0$.
As we have noted above, we cannot simply reduce the problem of the the propagation of the K\"ahler structure to that of the uniqueness of \emph{solutions}
to \eqref{eq:brf} (that is, to the problem of backward uniqueness of solutions to the Ricci flow). Instead we will specialize to the K\"ahler setting
the approach in \cite{KotschwarRFHolonomy}, which encodes the general problem of preservation of reduced holonomy in terms of the uniqueness of solutions to an associated system of mixed differential inequalities.

\subsection{A family of almost-complex structures on $M$.}
The first step in our reduction of the problem is to extend $J_0$ to a family of almost complex structures $J = J(\tau)\in \operatorname{End}(TM)$ for $\tau\in [0, T]$
relative to which $g(\tau)$ remains Hermitian. (In our application, we will eventually we will wish to argue that $J(\tau) = J_0$.)  We do so by solving the ordinary differential initial value problem
\begin{equation}\label{eq:jev}
  \left\{\begin{array}{rl}
	      \pdtau J^a_b =  R_b^c J^a_c - R_c^aJ_b^c & \mbox{on}\ M\times (0, T]\\
	      J(x, 0) = J_0(x) & \mbox{on}\ M
         \end{array}\right.
\end{equation}
in each fiber of $\operatorname{End}(TM)$. 

The evolution equation in \eqref{eq:jev} may be understood in terms of the operator $D_{\tau}$ which acts 
on families $V = V(\tau)$ of $(k, l)$-tensors by
\begin{align*}
 D_{\tau}V_{b_1b_2 \ldots b_k}^{a_1a_2\ldots a_l} &= \pdtau V_{b_1b_2 \ldots b_k}^{a_1a_2\ldots a_l} 
 - R_{b_1}^{c}V_{cb_2\ldots b_k}^{a_1a_2\ldots a_l}  - R_{b_2}^{c}V_{b_1c\ldots b_k}^{a_1a_2\ldots a_l}
 - \cdots - R_{b_k}^{c}V_{b_1b_2\ldots c}^{a_1a_2\ldots a_l}\\
 &\phantom{=\pdtau V_{b_1b_2 \ldots b_k}^{a_1a_2\ldots a_l} }+ R_{c}^{a_1}V_{b_1b_2\ldots b_k}^{ca_2\ldots a_l}
 + R_{c}^{a_2}V_{b_1b_2\ldots b_k}^{a_1c\ldots a_l} + \cdots + R_{c}^{a_l}V_{b_1b_2\ldots b_k}^{a_1a_2\ldots c}. 
\end{align*}
Relative to a smooth family of local frames $\{e_{i}(\tau)\}_{i=1}^{n}$ evolving so as to remain
orthonormal relative to $g(\tau$), the components of $D_{\tau}V$ express the total derivatives 
\[
  D_{\tau}V_{b_1b_2 \ldots b_k}^{a_1a_2\ldots a_l} = \pdtau \left(V(e_{b_1}, e_{b_2}, \ldots, e_{b_k}, e_{a_1}^*, e_{a_2}^*, \ldots, e_{a_l}^*)\right).
\]
Alternatively, $D_\tau$ may be regarded as a vector on the product of the frame bundle with $[0, T]$ tangent
to the bundle of $g(\tau)$-orthonormal frames. See \cite{HamiltonHarnack} or Appendix F of \cite{RFV2P2} for details.

Equations \eqref{eq:brf} and \eqref{eq:jev} are then equivalent to the assertions that $D_{\tau} g = 0$ and $D_{\tau} J =0$,
and imply that
\[
J^2  = -\operatorname{Id}, \quad g(\cdot, \cdot) = g(J\cdot, J\cdot),
\]
that is, that $J$ remains an almost complex structure and $g$ remains Hermitian with respect to $J$ on $M\times[0, T]$,

\subsection{A time-dependent splitting of $\tf$.}
We will now examine the relationships between several families of endomorphisms of $\tf$ induced by $J$. Below, we will write
\[
    \theta\wedge \sigma = \theta\otimes\sigma - \sigma\otimes \theta,
\]
for $\theta$, $\sigma\in T^*M$, and use the metric
\begin{equation}\label{eq:metric}
 \langle \theta\wedge\sigma, \phi\wedge \psi\rangle = \det\left(\begin{array}{cc}
                                                               \langle \theta, \phi \rangle & \langle \theta, \psi\rangle \\
							       \langle \sigma, \phi\rangle & \langle \sigma, \psi\rangle
                                                              \end{array}\right)
\end{equation}
induced on $\tf$ by $g(\tau)$.

Now define $\Fm$, $\Gm\in \End(\tf)$ by
\begin{align*}
 (\Fm\eta)(X, Y) &= \frac{1}{2}\left(\eta(X, JY) + \eta(JX, Y)\right), \quad (\Gm\eta)(X, Y) = \eta(JX, JY),
\end{align*}
for $\eta \in \tf$ and $X$, $Y$ in $TM$, and let 
\[
 \Pmt = \frac{1}{2}(\operatorname{Id} + \mathrm{G}), \quad \Pmh = \frac{1}{2}(\operatorname{Id} - \Gm). 
\]
The identities
\begin{equation}\label{eq:gfrel}
  \quad \Gm^2 = \operatorname{Id}, \quad \Fm \Gm = \Gm \Fm = - \Fm
\end{equation}
satisfied by $\Fm$ and $\Gm$ imply the following relations between $\Fm$, $\Pmt$, and $\Pmh$.

\begin{lemma}\label{lem:projprop}
On $M\times[0, T]$ we have the identities
\begin{align*}
 &\Pmt^2 = \Pmt, \quad \Pmh^2 = \Pmh, \quad \Pmh\Pmt = \Pmt\Pmh = 0, \quad \Pmt + \Pmh = \operatorname{Id},\\ 
 &\qquad\Pmt^* = \Pmt, \quad \Pmh^* = \Pmh, \quad \Fm^* = - \Fm,\\
 &\Fm^2 = - \Pmh, \quad \Pmt \Fm = \Fm\Pmt = 0,
 \quad \Fm\Pmh = \Pmh \Fm = \Fm.
\end{align*}
Here $\,^*$ denotes the adjoint of the operator relative to the metric \eqref{eq:metric}.	
\end{lemma}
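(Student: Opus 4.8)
The plan is to read off every identity fiberwise in $\End(\tf)$ as a purely algebraic consequence of the structural relations already recorded; no analysis enters, and I would organize the verification line by line according to which ingredients it requires.

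The top line uses only that $\Gm$ is an involution. Substituting $\Pmt = \tfrac{1}{2}(\Id + \Gm)$ and $\Pmh = \tfrac{1}{2}(\Id - \Gm)$ and invoking $\Gm^2 = \Id$ from \eqref{eq:gfrel}, one gets $\Pmt^2 = \tfrac14(\Id + 2\Gm + \Gm^2) = \Pmt$, and symmetrically $\Pmh^2 = \Pmh$, while $\Pmt\Pmh = \Pmh\Pmt = \tfrac14(\Id - \Gm^2) = 0$ and $\Pmt + \Pmh = \Id$ are immediate. Thus $\Pmt$ and $\Pmh$ become complementary idempotents the instant $\Gm^2 = \Id$.

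For the bottom line, the relations coupling $\Fm$ to the projections again follow by substitution, now using $\Fm\Gm = \Gm\Fm = -\Fm$: for instance $\Pmt\Fm = \tfrac12(\Fm + \Gm\Fm) = 0$ and $\Pmh\Fm = \tfrac12(\Fm - \Gm\Fm) = \Fm$, with $\Fm\Pmt$ and $\Fm\Pmh$ handled identically. The identity $\Fm^2 = -\Pmh$ is the lone relation not forced by \eqref{eq:gfrel} alone—indeed $\Fm = 0$ is consistent with \eqref{eq:gfrel}—so here I would return to the definition of $\Fm$ and use $J^2 = -\Id$ directly: applying $\Fm$ twice yields $(\Fm^2\eta)(X,Y) = \tfrac12\bigl(\eta(JX,JY) - \eta(X,Y)\bigr) = -(\Pmh\eta)(X,Y)$.

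The middle line is the only place where the metric \eqref{eq:metric} genuinely intervenes, and I expect the sign bookkeeping there to be the one mild obstacle. The key input is that $J(\tau)$ is a $g(\tau)$-isometry—recorded at the close of Section 2.1—equivalently that $J$ is $g(\tau)$-skew-adjoint. I would test $\Gm^* = \Gm$ and $\Fm^* = -\Fm$ on decomposable two-forms, writing $J^*\theta \dfn \theta\circ J$ so that $\Gm(\theta\wedge\sigma) = J^*\theta\wedge J^*\sigma$ and $\Fm(\theta\wedge\sigma) = \tfrac12(J^*\theta\wedge\sigma + \theta\wedge J^*\sigma)$. Skew-adjointness of $J$ passes to one-forms as $\langle J^*\theta, \phi\rangle = -\langle\theta, J^*\phi\rangle$; feeding this into the $2\times2$ determinant \eqref{eq:metric} produces two cancelling sign changes for $\Gm$, giving $\Gm^* = \Gm$, and a single uncancelled sign in each term of $\Fm$, giving $\Fm^* = -\Fm$. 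The self-adjointness $\Pmt^* = \Pmt$ and $\Pmh^* = \Pmh$ is then immediate from $\Gm^* = \Gm$. Tracking these signs carefully through the determinant expansion is the crux; everything else is formal.
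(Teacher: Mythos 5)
Your proposal is correct and matches the paper's (essentially unstated) argument: the paper simply asserts that the identities follow from \eqref{eq:gfrel}, and your fiberwise algebraic verification is exactly the intended check. You go slightly beyond the paper in a useful way by noting that $\Fm^2 = -\Pmh$ and the adjoint row are \emph{not} consequences of \eqref{eq:gfrel} alone but require the definitions of $\Fm$, $\Gm$ together with $J^2 = -\operatorname{Id}$ and the Hermitian property of $g$ established at the end of Section 2.1 --- a point the paper glosses over, and your sign bookkeeping through the determinant \eqref{eq:metric} is accurate.
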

The first two rows of relations imply, among other things, that 
\[
 \mathcal{H} = \mathcal{H}(\tau) = \operatorname{im}\Pmt(\tau), \quad \mathcal{K} = \mathcal{K}(\tau) = \operatorname{im} \Pmh(\tau),
\]
are complementary orthogonal subbundles of $\tf$.

\begin{lemma}\label{lem:splitting} For any $(p, \tau) \in M\times [0, T]$, we have the $g(\tau)$-orthogonal decomposition
\begin{equation}\label{eq:splitting}
    \wedge^2(T_p^*M) = \mathcal{H}_p(\tau)\oplus \mathcal{K}_p(\tau)
\end{equation}
and $\mathcal{H}_p(\tau)$ is a subalgebra of $\wedge^2(T_p^*M)$ isomorphic to $\mathfrak{u}(n/2)$ relative to the bracket
\begin{equation}\label{eq:bracket}
 [\omega, \eta]_{ij} = \omega_{ik}\eta_{kj} - \omega_{jk}\eta_{ki}
\end{equation}
induced by $g(\tau)$.
\end{lemma}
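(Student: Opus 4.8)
The orthogonal decomposition \eqref{eq:splitting} requires no further work: as already observed following Lemma \ref{lem:projprop}, the operators $\Pmt$ and $\Pmh$ are complementary self-adjoint idempotents with $\Pmt + \Pmh = \Id$, so their images $\Hc_p(\tau)$ and $\Kc_p(\tau)$ are $g(\tau)$-orthogonal and together span $\wedge^2(T_p^*M)$. The substance of the lemma is thus the identification of $\Hc_p(\tau)$ as a Lie subalgebra isomorphic to $\mathfrak{u}(n/2)$, and my plan is to transport the entire question to the algebra of $g(\tau)$-skew-symmetric endomorphisms of $T_pM$, where the bracket \eqref{eq:bracket} becomes the ordinary matrix commutator and the relevant subspace becomes transparent.

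Fix $(p,\tau)$ and abbreviate $V = T_pM$, $g = g(\tau)$, $J = J(\tau)$. First I would identify $\wedge^2(T_p^*M)$ with the space $\mathfrak{so}(V,g)$ of $g$-skew-symmetric endomorphisms of $V$ by sending a two-form $\omega$ to the endomorphism $A_\omega$ determined by $g(A_\omega X, Y) = \omega(X,Y)$. In a $g$-orthonormal frame this carries the components $\omega_{ij}$ to a skew-symmetric matrix, and a direct computation using the skew-symmetry of $\omega$ and $\eta$ rewrites \eqref{eq:bracket} as
\[
 [\omega,\eta]_{ij} = \left(A_\omega A_\eta - A_\eta A_\omega\right)_{ij},
\]
so that $(\wedge^2(T_p^*M), [\cdot,\cdot]) \cong \mathfrak{so}(n)$. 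Next I would characterize $\Hc_p$ under this identification. Since $\Pmt = \frac12(\Id + \Gm)$ and $\Gm^2 = \Id$ by \eqref{eq:gfrel}, the subspace $\Hc_p = \operatorname{im}\Pmt$ is exactly the $(+1)$-eigenspace of $\Gm$, i.e.\ the two-forms with $\omega(JX, JY) = \omega(X,Y)$. Using that $J$ is a $g$-isometry with $J^{-1} = -J$, this invariance is equivalent to $A_\omega J = J A_\omega$; hence $\Hc_p$ corresponds precisely to those $g$-skew-symmetric endomorphisms of $V$ that are complex linear with respect to $J$.

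It then remains to recognize this space as $\mathfrak{u}(n/2)$. The commutator of two skew-symmetric endomorphisms is again skew-symmetric, and if $A$ and $B$ each commute with $J$ then so does $[A,B]$, so $\Hc_p$ is closed under the bracket and is a subalgebra. Choosing a unitary frame to identify $(V, g, J)$ with $\CC^{n/2}$ carrying its standard Hermitian structure---so that $J$ becomes multiplication by $\sqrt{-1}$ and $g$ the real part of the Hermitian inner product---the $g$-skew-symmetric, $J$-linear endomorphisms become exactly the skew-Hermitian $(n/2)\times(n/2)$ complex matrices, on which the matrix commutator is the standard bracket. This exhibits $\Hc_p(\tau) \cong \mathfrak{u}(n/2)$ and completes the argument. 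The only genuine computations are the two index identities---that \eqref{eq:bracket} is the commutator and that $\Gm$-invariance is the relation $A_\omega J = J A_\omega$---and I expect the main (if minor) difficulty to be the bookkeeping of signs arising from $J^{-1} = -J$ and from the convention identifying two-forms with endomorphisms; once these are fixed, the passage to skew-Hermitian matrices and hence to $\mathfrak{u}(n/2)$ is standard.
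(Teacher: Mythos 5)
Your proposal is correct and follows essentially the same route as the paper: the paper also dispatches the orthogonal splitting via Lemma \ref{lem:projprop} and then identifies $\mathcal{H}_p(\tau)$ with the $J$-commuting, $g$-skew-symmetric endomorphisms of $T_pM$ through the isomorphism $A \mapsto g(\cdot, A\cdot)$ intertwining the commutator with the bracket \eqref{eq:bracket}, recognizing that subalgebra as $\mathfrak{u}(n/2)$. Your extra details (the $(+1)$-eigenspace characterization of $\operatorname{im}\Pmt$ via $\Gm^2 = \Id$ and the unitary-frame identification with skew-Hermitian matrices) merely flesh out steps the paper states without proof.
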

\begin{proof}
 As we have noted, the first assertion follows from the first two rows of Lemma \ref{lem:projprop}.
 For the second assertion, let $\mathfrak{s}_p(\tau)\cong \mathfrak{so}(n)$
 denote the Lie subalgebra of endomorphisms of $T_pM$ (under commutation) which are skew-symmetric relative to $g(p, \tau)$.
 The endomorphisms in $\mathfrak{s}_p(\tau)$ which further commute with $J = J(p, \tau)$ comprise a subalgebra $\mathfrak{u}_p(\tau)$
 of $\mathfrak{s}_p(\tau)$ isomorphic to $\mathfrak{u}(n/2)$. The map 
 $\Phi_{p, \tau}: \mathfrak{s}_p(\tau) \to \wedge^2(T^*_pM)$ given by $\Phi_{p, \tau}(A)(X, Y) =  g(X, AY)$
 is an isomorphism relative to the bracket \eqref{eq:bracket} on $\wedge^2(T^*_pM)$, and the image of $\mathfrak{u}_p(\tau)$
 under $\Phi$, consisting of those two-forms $\eta$ for which $\eta(X, Y) = \eta(JX, JX)$ for all $X$, $Y\in T_pM$, 
 is precisely $\mathcal{H}_p(\tau)$.
\end{proof}

\begin{remark} Relative to $\wedge_{\CC}^2T^*M = \tf\otimes_{\RR} \CC$, we have
\[
 \Pm^{(2, 0)} = \frac{1}{2}\left(\Pmh - \sqrt{-1}\Fm \right),\quad
 \Pm^{(0, 2)} = \frac{1}{2}\left(\Pmh + \sqrt{-1}\Fm \right),\quad
 \Pm^{(1, 1)} = \Pmt,
\]
where $\Pm^{(2, 0)}$, $\Pm^{(0, 2)}$, $\Pm^{(1, 1)}$ are the projections of $\wedge_{\CC}^2T^*M$
onto $\wedge^{2, 0}M$, $\wedge^{0, 2}M$, and $\wedge^{1, 1}M$, respectively.
\end{remark}

\subsection{The decomposition of the curvature operator}
Let us use $R$ to denote the $(4, 0)$ curvature tensor and $\Rm:\tf \to \tf$ to denote the curvature operator of $g(\tau)$. We adopt the convention
that
\[
      \Rm(\eta)_{ij} = -R_{ijab}\eta_{ab}, \quad \eta\in \tf.
\]
Relative to an orthonormal frame $\{e_i\}_{i=1}^n$, we have
\[
 \Rm_{ijkl} = \Rm_{klij} = \langle \Rm(e_i^*\wedge e_j^*), e^*_k\wedge e^*_l\rangle = -2R_{abij}\langle e^*_a\wedge e^*_b, e^*_k\wedge e_l^*\rangle
 = -2R_{ijkl}.
\]
Similarly, we will use $\Sm$ for the operator $\nabla \Rm\in  T^*M\otimes\End(\tf)$ such that
\[
 \Sm(X, \eta)_{ij} = -2\nabla_aR_{ijbc}X_a\eta_{bc}
\]
and adopt the notation
\[
 \Sm_{mijkl} = \langle\Sm(e_m, e_i^*\wedge e_j^*), e^*_k\wedge e^*_l\rangle = -2\nabla_mR_{ijkl}.
\]

Now define $\Rmt$, $\Rmh: \tf\to\tf$ by
\[
\Rmt = \Rm \circ \Pmt, \quad \Rmh = \Rm\circ\Pmh
\]
and $\Smt$, $\Smh\in T^*M\otimes\End(\tf)$ by
\[
\Smt = \nabla \Rm \circ \Pmt, \quad \Smh = \nabla\Rm\circ\Pmh.
\]
Here,
\[
 \Smt(X,\eta) = (\nabla_X\Rm)\circ \Pmt(\eta), \quad \Smh(X, \eta) = (\nabla_X\Rm) \circ \Pmh(\eta),
\]
for $X\in TM$ and $\eta\in \tf$.

\begin{lemma}\label{lem:init} At $\tau = 0$, we have
\begin{align}\label{eq:p0}
   & \nabla \Pmt = \nabla \Pmh = \nabla \Fm = 0,
\end{align}
\begin{align}
\label{eq:r0}
    &\Rm \circ \Fm = 0, \quad \Rmh = 0, \quad \Rmt = \Rm,
\end{align}
and
\begin{align}
\label{eq:dr0}
    &\nabla \Rm \circ \Fm = 0, \quad \Smh = 0, \quad \Smt = \nabla \Rm.
\end{align}
\end{lemma}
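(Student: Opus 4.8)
The plan is to derive all three displays from two classical facts about the Kähler metric $g_0 = g(0)$: that its complex structure $J = J(0) = J_0$ is parallel, $\nabla J = 0$, and that its curvature tensor is invariant under $J$ in each pair of slots. Everything else is the Leibniz rule together with the algebraic relations of Lemma \ref{lem:projprop}.

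For \eqref{eq:p0} I would observe that $\Fm$ and $\Gm$ are built pointwise and algebraically from $J$ and the metric contractions appearing in their defining formulas. Since $\nabla g = 0$ and $\nabla J = 0$ on a Kähler manifold, differentiating these formulas by the product rule gives $\nabla\Fm = \nabla\Gm = 0$; as $\Pmt = \frac{1}{2}(\Id + \Gm)$ and $\Pmh = \frac{1}{2}(\Id - \Gm)$, the remaining identities $\nabla\Pmt = \nabla\Pmh = 0$ follow at once.

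For \eqref{eq:r0} the essential input is the Kähler symmetry $R(X, Y, JZ, JW) = R(X, Y, Z, W)$, which I would translate, with the sign conventions fixed above, into the operator identity $\Rm\,\Gm = \Rm$ on $\tf$. Since $\Pmh = \frac{1}{2}(\Id - \Gm)$, this gives immediately $\Rmh = \Rm\,\Pmh = \frac{1}{2}(\Rm - \Rm\,\Gm) = 0$. The identity $\Rmt = \Rm$ then follows from $\Rmt = \Rm\,\Pmt = \Rm(\Id - \Pmh) = \Rm - \Rmh = \Rm$, and $\Rm\circ\Fm = 0$ follows because $\Fm = \Pmh\,\Fm$ by Lemma \ref{lem:projprop}, so that $\Rm\circ\Fm = (\Rm\,\Pmh)\Fm = \Rmh\,\Fm = 0$.

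Finally, \eqref{eq:dr0} is obtained by differentiating the identities of \eqref{eq:r0}. Applying $\nabla$ to $\Rm\,\Pmh = 0$ and using the Leibniz rule together with $\nabla\Pmh = 0$ from \eqref{eq:p0} gives $(\nabla\Rm)\circ\Pmh = \nabla(\Rm\,\Pmh) - \Rm\circ(\nabla\Pmh) = 0$, that is, $\Smh = 0$; then $\Smt = \nabla\Rm\circ\Pmt = \nabla\Rm - \Smh = \nabla\Rm$, and $\nabla\Rm\circ\Fm = (\nabla\Rm\circ\Pmh)\Fm = \Smh\,\Fm = 0$ exactly as in the previous step. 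I expect the only point requiring genuine care to be the translation of the Kähler curvature symmetry into $\Rm\,\Gm = \Rm$ under the particular sign and factor conventions $\Rm_{ijkl} = -2R_{ijkl}$ adopted above; once the projections are known to be parallel and $\Rmh$ is known to vanish, the remainder is routine bookkeeping with Lemma \ref{lem:projprop}.
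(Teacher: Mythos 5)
Your proof is correct and follows essentially the same route as the paper: $\nabla J = 0$ makes $\Fm$, $\Gm$, and hence the projections parallel, a K\"ahler curvature symmetry kills the $\Kc$-components of $\Rm$, and the Leibniz rule with \eqref{eq:p0} yields \eqref{eq:dr0}. The only (immaterial) difference is the order within \eqref{eq:r0}: the paper first derives $\Rm\circ\Fm = 0$ from $R(X,Y,JZ,W) + R(X,Y,Z,JW) = 0$ and then gets $\Rmh = 0$ via $\Pmh = -\Fm^2$, while you first derive $\Rmh = 0$ from the equivalent symmetry $R(X,Y,JZ,JW) = R(X,Y,Z,W)$ (i.e.\ $\Rm\circ\Gm = \Rm$) and then get $\Rm\circ\Fm = 0$ via $\Fm = \Pmh\circ\Fm$, both using only Lemma \ref{lem:projprop}.
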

\begin{proof}
 When $\tau = 0$, we have $\nabla J = 0$ and consequently
 \[
  R(X, Y, JZ, W) + R(X, Y, Z, JW) = 0	
\]
for all $X$, $Y$, $Z$, $W\in TM$. The former implies that $\nabla \Fm = \nabla \Pmh = \nabla \Pmt = 0$ and the latter implies that
$\Rm\circ \Fm = 0$; in fact,
\[
 (\Rm\circ \Fm)(Z^*\wedge W^*)(X, Y) = R(X, Y, JZ, W) + R(X, Y, Z, JW).
\]
Since $\Pmh = -\Fm^2$ and $\Pmt = \operatorname{Id}- \Pmh$ by Lemma \ref{lem:projprop},
we also have $\Rm\circ \Pmh = 0$ and  $\Rm\circ \Pmt = \Rm$. Then
\[
  0 = \nabla (\Rm\circ \Fm) = \nabla \Rm \circ \Fm + \Rm \circ \nabla \Fm = \nabla \Rm \circ \Fm,
\]
and the other identities in \eqref{eq:dr0} follow similarly.
\end{proof}

 The identities in Lemma \ref{lem:init} reflect that the image of the curvature operator
 is contained in the parallel subalgebra $\mathfrak{hol}(g(0))$ of $\tf$ defined by the 
 holonomy representation of $g(0)$.
 Since $g(0)$ is K\"ahler, this means that, at any $p\in M$,
\[
 \operatorname{im}\Rm(p, 0) \subset\mathfrak{hol}_p(g(0)) \subset \mathcal{H}_p(0) \cong \mathfrak{u}(n/2),
\]
and so
\[
 \mathcal{K}_p(0) = \mathcal{H}_p^{\perp}(0) \subset \operatorname{ker} \Rm(p, 0), 
\]
since $\Rm(p, 0)$ is self-adjoint.

\subsection{A closed system of mixed differential inequalities}

Now define the sections
\[
\Am = \nabla \Pmh\in T^*M\otimes\End(\tf), \quad \Bm = \nabla\nabla \Pmh \in T^*M\otimes T^*M\otimes \End(\tf).
\]
It follows from the general computations in Section 4 of \cite{KotschwarRFHolonomy}
 that $\Rmh$ and $\Smh$, together with $\Am$ and $\Bm$, satisfy a closed system of differential inequalities.

\begin{proposition}\label{prop:sysev} The sections $\Am$, $\Bm$, $\Rmh$, $\Smh$ satisfy 
\begin{align}
\label{eq:aev}
      \left|D_{\tau} \Am \right| &\leq C|\Rm||\Am| + C|\Smh|\\  
\label{eq:bev}
     \left|D_{\tau}\Bm\right| &\leq C|\nabla \Rm| |\Am|+ C|\Rm|\left(|\Bm| + | \nabla \Smh|\right) \\
 \label{eq:rhev}
 \left|\left(D_{\tau} + \Delta\right)\Rmh\right| &\leq C|\nabla \Rm||\Am| + C|\Rm|\left(|\Bm| + |\Rmh|\right)\\
 \label{eq:shev}
 \left|\left(D_{\tau} + \Delta\right)\Smh\right| &\leq C|\nabla\nabla\Rm| |\Am| + C|\nabla\Rm||\Rmh| + C|\Rm|\left(|\Bm| + |\Rmh| + |\Smh|\right)
\end{align}
for some $C = C(n)$ on $M\times [0, T]$.
\end{proposition}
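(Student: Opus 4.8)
The plan is to establish the four differential inequalities in Proposition~\ref{prop:sysev} by direct computation of the operators $D_\tau \Am$, $D_\tau \Bm$, $(D_\tau + \Delta)\Rmh$, and $(D_\tau+\Delta)\Smh$, and then to bound each resulting expression by the quantities appearing on the right-hand sides, treating the structural constants as dimensional constants $C = C(n)$. The essential point is that the system is \emph{closed}: after commuting derivatives and substituting the evolution equations \eqref{eq:brf} and \eqref{eq:jev}, every term that arises can be re-expressed in terms of the four sections $\Am$, $\Bm$, $\Rmh$, $\Smh$ (and their covariant derivatives, where permitted) together with curvature factors $|\Rm|$, $|\nabla\Rm|$, $|\nabla\nabla\Rm|$. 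Since the paper attributes these computations to Section~4 of \cite{KotschwarRFHolonomy}, my proof would proceed by specializing those general holonomy computations to the present almost-complex setting and verifying that the algebraic structure carries over intact.

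First I would compute $D_\tau \Am = D_\tau \nabla\Pmh$. Because $\Pmh = \tfrac{1}{2}(\Id - \Gm)$ depends on $J$, the operator $D_\tau$ acting on $\Pmh$ can be evaluated from \eqref{eq:jev}; the key observation is that $D_\tau$ commutes with the metric contraction (since $D_\tau g = 0$), so $D_\tau$ applied to an algebraic expression in $J$ and $g$ reduces to $D_\tau J$, which by \eqref{eq:jev} is itself linear in $\Rm$ and $J$. Commuting $D_\tau$ past the covariant derivative $\nabla$ introduces a curvature commutator term, schematically of the form $\Rm * \Am$ plus a term involving $\nabla(D_\tau \Pmh)$; the latter, upon using the relation $\Fm^2 = -\Pmh$ and the definition $\Smh = \nabla\Rm\circ\Pmh$, contributes the $|\Smh|$ term. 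This yields \eqref{eq:aev}. The estimate \eqref{eq:bev} for $\Bm = \nabla\nabla\Pmh$ follows by differentiating the computation for $\Am$ once more, which is why its right-hand side carries an extra derivative on the curvature factors and picks up $|\nabla\Smh|$ and $|\Bm|$ in place of $|\Smh|$ and $|\Am|$.

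For the two parabolic inequalities I would use that $\Rm$ itself satisfies the evolution equation $(D_\tau + \Delta)\Rm = \Rm * \Rm$ (the standard reaction term under backward Ricci flow, quadratic in $\Rm$). Applying $(D_\tau + \Delta)$ to $\Rmh = \Rm\circ\Pmh$ via the Leibniz rule produces three groups of terms: the reaction term $\Rm * \Rmh$; the term $\Rm\circ(D_\tau+\Delta)\Pmh$, which by the computations above is controlled by $|\Rm|(|\Bm| + |\Rmh|)$ together with $|\nabla\Rm||\Am|$; and the mixed gradient term $\nabla\Rm * \nabla\Pmh = \nabla\Rm * \Am$ arising from $\Delta(\Rm\circ\Pmh)$. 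Collecting these gives \eqref{eq:rhev}. The inequality \eqref{eq:shev} for $\Smh = \nabla\Rm\circ\Pmh$ is obtained analogously, starting from the evolution equation for $\nabla\Rm$, which now contains a term $\nabla\Rm * \Rm$; the extra derivative accounts for the appearance of $|\nabla\nabla\Rm|$ and the additional coupling to $|\Rmh|$ through the commutator $[\nabla, \Delta]$ acting on the curvature.

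The main obstacle, and the step requiring the most care, is the bookkeeping in the two parabolic estimates: one must verify that after commuting $\Delta$ and $D_\tau$ through the projection $\Pmh$ and through covariant derivatives, no uncontrolled terms survive—in particular, that every occurrence of $\nabla J$ (which is \emph{not} a listed section and need not vanish for $\tau > 0$) is absorbed into $\Am$ or $\Bm$ via the identities $\Am = \nabla\Pmh$ and $\Bm = \nabla\nabla\Pmh$, and that the Laplacian of $\Pmh$ is expressible through $\Bm$ rather than producing a genuinely new quantity. This is precisely the closure property of the system, and confirming it in the K\"ahler specialization is where the algebraic relations of Lemma~\ref{lem:projprop}, especially $\Fm^2 = -\Pmh$ and $\Fm\Pmh = \Fm$, do the decisive work in rewriting all intermediate expressions in terms of the admissible sections.
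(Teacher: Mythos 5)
There is a genuine gap, and it sits at the heart of the proposition. Your treatment of the parabolic inequalities \eqref{eq:rhev} and \eqref{eq:shev} replaces the reaction term by the schematic expression $\Rm \ast \Rm$ and then asserts that composing with $\Pmh$ yields a term of the form $\Rm \ast \Rmh$. That step cannot be done schematically. The actual evolution equation is $\left(D_{\tau}+\Delta\right)\Rm = -\Rm^2 - \Rm^{\#}$, and while $\Rm^2\circ\Pmh = \Rm\circ\Rmh$ is indeed controlled by $|\Rm||\Rmh|$, the Lie-algebra square $\Rm^{\#}\circ\Pmh$ is not: no formal bookkeeping produces a factor of $\Rmh$ there, and the naive bound is only $C|\Rm|^2$, which does not close the system (there is then no vanishing quantity on the right-hand side). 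The paper's proof handles exactly this term by writing $\Rm = \Rmh^* + \Rmt^*$, using bilinearity and symmetry of $\#$, and invoking the vanishing $(\Rmt^*\#\Rmt^*)\circ\Pmh = 0$, which follows from $\left\langle [\Pmt\Rm\varphi_i, \Pmt\Rm\varphi_j], \Pmh\varphi_k\right\rangle = 0$, i.e.\ from the bracket relations \eqref{eq:larelations} that hold because $\Hc \cong \mathfrak{u}(n/2)$ is a \emph{subalgebra} (Lemma \ref{lem:splitting}). Your proposal never mentions $\Rm^{\#}$ or the bracket structure at all; instead you attribute the ``decisive work'' to the identities $\Fm^2 = -\Pmh$ and $\Fm\Pmh = \Fm$ of Lemma \ref{lem:projprop}, which are not what makes \eqref{eq:rhev} and \eqref{eq:shev} true. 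This is precisely the structural input the paper singles out, and omitting it leaves the central estimate unproved.

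There is also a secondary but real error in your account of \eqref{eq:aev} and \eqref{eq:bev}. Equation \eqref{eq:jev} is, by construction, equivalent to $D_{\tau}J = 0$ (the right-hand side of \eqref{eq:jev} is exactly the correction built into $D_{\tau}$), and since $\Pmh$ is algebraic in $J$ and $g$ with $D_{\tau}g = 0$, one has $D_{\tau}\Pmh = 0$ identically. Consequently the term $\nabla(D_{\tau}\Pmh)$, which you claim is the source of the $|\Smh|$ contribution, vanishes; and the commutator $[D_{\tau},\nabla]$ acting on $\Pmh$ is not of the form $\Rm \ast \Am$ but involves $\partial_\tau\Gamma \sim \nabla\Rc$ contracted against $\Pmh$. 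It is these commutator terms which, after using the Bianchi identity and the projection structure, produce \emph{both} the $C|\Smh|$ and the $C|\Rm||\Am|$ terms in \eqref{eq:aev}. So your bookkeeping assigns the two right-hand-side terms to the wrong sources, and the confusion between $\partial_\tau J$ (linear in $\Rm$ and $J$) and $D_{\tau}J$ (zero) is not cosmetic: the fact that $D_{\tau}\Pmt = D_{\tau}\Pmh = 0$ is one of the three properties (together with orthogonal complementarity and closure of $\Hc$ under the bracket) that the paper explicitly identifies as all that the cited computations of \cite{KotschwarRFHolonomy} require.
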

The only properties of the projections $\Pmt$ and $\Pmh$ of which the argument in \cite{KotschwarRFHolonomy} makes use is that they remain complementary and orthogonal,
evolve according
to $D_{\tau} \Pmt = D_{\tau}\Pmh = 0$ (which follows here from the fact that
$D_{\tau} J = 0$), and that $\Hc = \Pmt(\tf)$ remains closed under the Lie bracket. The computations actually hinge on the relations
\begin{equation}\label{eq:larelations}
    [\Hc, \Hc] \subset \Hc, \quad [\Hc, \Kc] \subset \Kc,
\end{equation}
implied by this latter condition. We will review here the derivation of \eqref{eq:rhev}, but 
refer the reader to \cite{KotschwarRFHolonomy} for the rest of the details. 

Recall that, under \eqref{eq:brf}, the curvature operator evolves according to the equation
\begin{equation}\label{eq:rmev}
  \left(D_{\tau} + \Delta\right)\Rm = - \Rm^2 - \Rm^{\#},
\end{equation}
where the second term denotes the Lie algebra square $\Rm^{\#} = \Rm \#\Rm$. Here, for 
$\mathrm{M}$, $\mathrm{N} \in \End(\tf)$, $\mathrm{M}\#\mathrm{N}\in \End(\tf)$ is the operator defined at $p\in M$ by
\begin{equation}\label{eq:sharpdef}
 (\mathrm{M}\# \mathrm{N})(\eta) = \frac{1}{2}\sum_{i, j} \left\langle [\mathrm{M}\varphi_i, \mathrm{N}\varphi_j], \eta\right\rangle [\varphi_i, \varphi_j],
\end{equation}
where $\{\varphi_i\}_{i=1}^{n(n-1)/2}$ is any orthonormal basis for $\wedge^2T_p^*M$.

Now, we may compute directly from \eqref{eq:rmev} that
\begin{align*}
 \left(D_{\tau} + \Delta\right)\Rmh &=  -(\Rm^2 + \Rm^{\#})\circ \Pmh + 2\nabla_i \Rm \circ \nabla_i \Pmh + \Rm \circ \Delta \Pmh,
\end{align*}
which immediately yields
\begin{align*}
 \left|\left(D_{\tau} + \Delta\right)\Rmh\right|&\leq C|\Rm|(|\Rmh| + |\Bm|) + C|\nabla \Rm| |\Am| + |\Rm^{\#}\circ \Pmh|.
\end{align*}
So, to obtain \eqref{eq:rhev}, it remains only to estimate $\Rm^{\#}\circ \Pmh$.

Using $*$ to denote the adjoint of an element in $\End(\tf)$,
we may write 
\[
\Rm = (\Pmh + \Pmt) \circ \Rm =  \Pmh \circ \Rm + \Pmt \circ \Rm = \Rmh^* + \Rmt^*.
\]
By the symmetry and bilinearity of the $\#$ pairing we then see that
\begin{equation}\label{eq:sharpex}
 \Rm^{\#} \circ \Pmh = (\Rm\#\Rm) \circ \Pmh = (\Rmh^*\#\Rmh^* + 2\Rmt^*\#\Rmh^*)\circ \Pmh + (\Rmt^* \#\Rmt^*) \circ \Pmh.
\end{equation}
But $(\Rmt^*\#\Rmt^*)\circ \Pmh = 0$ since, by \eqref{eq:larelations}, $\left\langle [\Pmt\Rm\varphi_i, \Pmt
\Rm\varphi_j], \Pmh\varphi_k\right\rangle = 0$ for all $i$, $j$, $k$. So
\[
 |(\Rm\#\Rm) \circ \Pmh| \leq C(|\Rmh^*| + |\Rmt^*|)|\Rmh^*| \leq C|\Rm||\Rmh|
\]
by \eqref{eq:sharpex}, and \eqref{eq:rhev} follows.

\subsection{Preservation of K\"ahlerity from the vanishing of $\Am$ and $\Rmh$}

We have seen that if the initial time-slice $(M, g(0))$ is K\"ahler, then $\Am$, $\Bm$, $\Rmh$, and $\Smh$ vanish on this slice.
In the next section, we will show that the backward uniqueness theorem proven in \cite{KotschwarWangConical}
implies that $\Am$, $\Bm$, $\Rmh$, and $\Smh$
must vanish identically on $M\times[0, T]$ for the specific class of solutions $(M, g(\tau))$ we will encounter in the proof of Theorem \ref{thm:kaehlerity}. 
For now we observe that 
the vanishing of these sections indeed imply that $(M, g(\tau))$ will remain K\"ahler relative to the fixed complex structure $J(0) = J_0$.

\begin{lemma}\label{lem:jext}
Let $g$, $J$, $\Rmh$, and $\Pmh$ be as above. If $\Rmh = 0$ and $\nabla \Pmh = 0$ on $M\times [0, T]$,
and $\nabla J = 0$ on $M\times\{0\}$, then $\nabla J = 0$ and $\pdtau J = 0$ on $M\times[0, T]$.
\end{lemma}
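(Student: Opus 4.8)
The plan is to establish the two conclusions separately: the vanishing of the time-derivative $\pdtau J$, which I will extract from the hypothesis $\Rmh = 0$, and the vanishing of the spatial covariant derivative $\nabla J$, which I will extract from $\nabla\Pmh = 0$. Throughout, $D_{\tau}g = 0$ and $D_{\tau}J = 0$ keep $g(\tau)$ Hermitian with respect to $J(\tau)$, so the fundamental two-form $\Ome = \Ome(\tau)$ defined by $\Ome(X,Y) = g(JX, Y)$, the operators $\Fm$, $\Gm$, and the splitting $\tf = \Hc\oplus\Kc$ of Lemma \ref{lem:splitting} are available at every time.

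For $\pdtau J = 0$, I would first observe that $\Rmh = \Rm\circ\Pmh = 0$ means $\Rm$ annihilates $\Kc = \operatorname{im}\Pmh$; since $\Rm$ is self-adjoint, this forces $\operatorname{im}\Rm \subset \Kc^{\perp} = \Hc$. Thus every curvature two-form $\Rm(\eta)$ lies in $\Hc$, i.e.\ is $J$-invariant in its (output) slots, which in terms of the $(4,0)$-tensor reads $R(JX, JY, Z, W) = R(X, Y, Z, W)$; combining this with the pair symmetry $R(X,Y,Z,W) = R(Z,W,X,Y)$ gives invariance in the second pair as well. Tracing over an orthonormal frame, and using that $\{Je_i\}$ is again orthonormal, one finds $\Rc(JX, JY) = \Rc(X, Y)$, i.e.\ the Ricci endomorphism commutes with $J$. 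Finally, the evolution equation \eqref{eq:jev} may be rewritten as $\pdtau J^a_b = J^a_c R^c_b - R^a_c J^c_b$, which is exactly the commutator of $J$ with the Ricci endomorphism; hence $\pdtau J = 0$ and $J(\tau)\equiv J_0$.

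For $\nabla J = 0$, I would exploit the identification in Lemma \ref{lem:splitting}: under the isomorphism $\Hc_p\cong\mathfrak u(n/2)$, the fundamental form $\Ome$ spans the (one-dimensional) center of $\Hc_p$, since it is the image of $J$ itself, which is central in $\mathfrak u_p$. The hypothesis $\nabla\Pmh = 0$ (equivalently $\nabla\Pmt = 0$) makes $\Hc$ a parallel subbundle, and the bracket \eqref{eq:bracket} is $\nabla$-parallel because it is built from $g$ and $\nabla g = 0$; consequently the center of $\Hc$ is a parallel line subbundle. Therefore $\nabla_X\Ome = \alpha(X)\Ome$ for some one-form $\alpha$, and since $|\Ome|^2$ is constant we get $\alpha = 0$, so $\nabla\Ome = 0$ and hence $\nabla J = 0$ on all of $M\times[0,T]$, consistent with the given initial condition at $\tau = 0$.

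I expect the spatial step to be the main obstacle: $\nabla\Pmh = 0$ only asserts that the splitting $\Hc\oplus\Kc$ is parallel, and the real content is to recover the full endomorphism $J$ (equivalently $\Ome$) canonically from this parallel data; the observation that $\Ome$ generates the parallel center of the bundle of Lie algebras $\Hc$ is what closes the gap. A purely computational alternative is to expand $\nabla\Pmh = 0$ directly in terms of $K = \nabla J$ and contract against $g$ and $J$, which yields $(n-2)K = 0$ and hence $K = 0$ for $n\ge 4$ (the case $n = 2$ being vacuous, since $\Kc = 0$ there), but the Lie-algebraic argument is cleaner and uniform in $n$. Some care is also needed in the time step to invoke self-adjointness and the pair symmetry, without which the vanishing of $\Rm$ on $\Kc$ alone would not yield the full $J$-invariance of $\Rc$.
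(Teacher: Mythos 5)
Your proof is correct, but it takes a genuinely different route from the paper's. The paper argues by propagation rather than by pointwise algebra: it first combines the two hypotheses to get $\Smh = 0$ (from $0 = \nabla\Rmh = \Smh + \Rm\circ\nabla\Pmh$) and hence $\Sm\circ\Fm = 0$ via Lemma \ref{lem:projprop}; this identity is precisely what kills the inhomogeneous term in a fiberwise linear ODE in $\tau$ satisfied by $W = \nabla J$ (computed from $D_{\tau}J = 0$), so $W\equiv 0$ follows from the initial condition $W(\cdot, 0) = 0$ by ODE uniqueness; only then does it conclude $\pdtau J = 0$, using that a \emph{parallel} $J$ forces $\Rc$ to commute with $J$. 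You instead decouple the hypotheses and work slice-by-slice in $\tau$, never using the initial condition: $\Rmh = 0$ alone (with self-adjointness of $\Rm$ and the pair symmetry of $R$) gives $J$-invariance of the curvature and hence of $\Rc$ by tracing, which kills the right-hand side of \eqref{eq:jev}; and $\nabla\Pmh = 0$ alone makes $\Hc$ a parallel subbundle of the bundle of Lie algebras $(\tf, [\cdot,\cdot])$, so its center---the line spanned by the K\"ahler form $\Ome$, since $Z(\mathfrak{u}(n/2))$ is one-dimensional and corresponds to $J$ itself---is a parallel line bundle, and constancy of $|\Ome|$ then forces $\nabla\Ome = 0$, i.e., $\nabla J = 0$. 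Both steps check out (as does your alternative contraction against $J$ yielding $(n-2)\nabla J = 0$), and your route has the nice by-product that the hypothesis $\nabla J = 0$ at $\tau = 0$ is redundant. What the paper's ODE route buys is uniformity with the framework it specializes: in \cite{KotschwarRFHolonomy} the subalgebra $\Hc$ is an arbitrary parallel holonomy subalgebra, and for a general such $\Hc$ there is no canonical central generator like $\Ome$ from which the parallel tensor can be reconstructed algebraically, so propagating the initial parallel tensor forward by an ODE is the mechanism that generalizes. Your argument exploits structure special to the unitary algebra---a one-dimensional center spanned by the very tensor one wants to recover---and is, in this K\"ahler setting, cleaner and self-contained.
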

\begin{proof}
First observe  that our assumptions imply that $\Sm \circ \Fm = 0$. Indeed, 
\[
\Sm \circ \Fm = \Sm \circ (\Pmh \circ \Fm) = \Smh \circ \Fm = (\nabla \Rm \circ \Pmh) \circ \Fm = \nabla \Rmh \circ \Fm = 0,
\]
using Lemma \ref{lem:projprop}. The observation
that $J$ remains parallel then follows from a direct pointwise calculation.
 Fix $p\in M$ and let $W_{ib}^a = \nabla_iJ^b_a$. Since $D_{\tau} J = 0$,
 we have
 \begin{align*}
  \pdtau W_{ib}^a & = \nabla_i\left(R^a_cJ^c_b - R^c_bJ^a_c\right) - \pdtau\Gamma_{ib}^cJ^a_c + \pdtau\Gamma_{ic}^aJ^c_b\\
  &= J^a_cg^{cd}\left(\nabla_dR_{bi} - \nabla_bR_{di}\right)
     + J^c_bg^{ad}\left(\nabla_c R_{di} - \nabla_dR_{ci}\right)
     - R^a_cW_{ib}^c + R_b^cW_{ic}^a\\
  &= -g^{ac}g^{lm}\nabla_lR_{mibd}J^d_c + g^{ad}g^{lm}\nabla_lR_{midc}J^c_b
    - R^a_cW_{ib}^c + R_b^cW_{ic}^a\\
  &= g^{ad}g^{lm}\left(\nabla_lR_{micb}J^c_d + \nabla_lR_{midc}J^c_b\right)
  - R^a_cW_{ib}^c + R_b^cW_{ic}^a\\
  &= \frac{1}{2}g^{ad}g^{lm}(\Sm \circ \Fm)_{lmibd}- R^a_cW_{ib}^c + R_b^cW_{ic}^a\\
  &= - R^a_cW_{ib}^c + R_b^cW_{ic}^a
 \end{align*}
so that the components of $W$ in the fiber at $p$ satisfy a linear system of ordinary differential equations.
Since $W(p, 0) = 0$, it follows that $(\nabla J)(p, \tau) = W(p, \tau) = 0$ for all $\tau$.

But, if $J$ is parallel and $g$ is Hermitian relative to $J$, we then have $\Rc\circ J = J\circ \Rc$ as endomorphisms of $TM$. So
\[
  \pdtau \Rc = J\circ \Rc - \Rc\circ J = 0,
\]
and $J(\cdot, \tau) = J(\cdot, 0)$ on $M\times [0, T]$.
\end{proof}

More generally, if $g(\tau)$ is a solution to the backward Ricci flow for which
the reduced holonomy remains fixed (e.g., if $(M, g(\tau))$ is complete and of bounded curvature) then any tensor $V_0$ which is parallel on $M$ with respect
to $g(0)$ can be extended to a smooth family $V(\tau)$ of $g(\tau)$-parallel tensors on $M\times[0, T]$ via $D_{\tau} V = 0$ and $V(0) = V_0$.

\section{K\"ahlerity near spatial infinity}
Now we are ready to prove the first assertion in Theorem \ref{thm:kaehlerity}.  Our strategy is fundamentally the same as in \cite{KotschwarWangConical}: from
the asymptotically conical soliton, we construct a self-similar solution to the backward Ricci flow defined for $\tau\in (0, 1]$ on a sufficiently distant end,
which (after adjustment by 
a suitable diffeomorphism) converges smoothly as $\tau\searrow 0$ to the conical (K\"ahler) metric on some neighborhood of infinity. This transforms the essentially elliptic problem of unique continuation at infinity we are initially given into a parabolic problem of backward uniqueness on a finite time interval.

As in the introduction, let $(\Sigma, g_{\Sigma})$ denote  a compact Riemannian manifold of dimension $n-1$,
and let $g_c = dr^2 + r^2g_{\Sigma}$. Denote by $r_c: E_0 \to\RR$ the radial distance $r_c(r, \sigma) = r$ relative to $g_c$.

\begin{proposition}[Proposition 2.1, \cite{KotschwarWangConical}]
\label{prop:brf}
Suppose $(M, \bar{g}, \bar{f})$ is a shrinking Ricci 
soliton asymptotic to the regular cone $(E_0, g_c)$ along the end $V\subset M$. 
Then there exist $K_0$, $N_0$, and $\rad_0> 0$, and a smooth family of maps $\Psi_\tau: E_{\rad_0}\to V$
defined for $\tau\in (0,1]$ satisfying:
\begin{enumerate}
 \item[(1)] For each $\tau\in (0, 1]$, $\Psi_{\tau}$ is a diffeomorphism onto its image and
$\Psi_{\tau}(E_{\rad_0})$ is an end of $V$.
\item[(2)] The family of metrics $g(x, \tau) \dfn\tau\Psi_\tau^*\bar{g}(x)$ 
is a solution to the backwards Ricci flow \eqref{eq:brf}
for $\tau \in (0, 1]$, and converges smoothly as $\tau\searrow 0$ to $g(x, 0) \equiv g_c(x)$ on $E_{\rad_0}$.

\item[(3)] For all $m = 0, 1, 2, \ldots $,
\begin{align}
\label{eq:curvdecay}
\sup_{E_{\rad_0}\times [0,1]} \left(r_c^{m+2}+1\right)\abs{\nabla^{(m)}R} & \le K_0.
\end{align}
Here $|\cdot| = |\cdot|_{g(\tau)}$ and $\nabla = \nabla_{g(\tau)}$ denote the norm and the Levi-Civita connection associated to the metric $g = g(\tau)$.

\item[(4)] If $f$ is the function on $E_{\rad_0}\times (0,1]$ defined by 
$f(\tau)=\Psi_\tau^\ast \bar{f}$, then $\tau f$ extends
to a smooth function on all of $\Ec_{\rad_0}^1$ and there $g$ and $\tau f$ together satisfy
\begin{align}
\label{eq:fid0}
&  \lim_{\tau\searrow 0} 4\tau f(x, \tau) =r_c^2(x),\quad r_c^2-\frac{N_0}{r_c^{2}} \le 4 \tau f \le r_c^2 + \frac{N_0}{r_c^{2}}, 
\end{align}
and
\begin{align}
\label{eq:fid1}
& \pdtau (\tau f) =\tau S,  \  \tau^2|\nabla f|^2 - \tau f = -\tau^2 S, \ \tau \Rc(g) + \tau \nabla\nabla f= \frac{g}{2}.
\end{align}
\end{enumerate}
Here $S$ denotes the scalar curvature of $g$.
\end{proposition}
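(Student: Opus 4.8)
The plan is to realize $(\Ec_0, g_c)$ as the $\tau\searrow 0$ limit of the canonical self-similar solution attached to the soliton, obtained by flowing along the gradient of $\bar f$. Write $X\dfn\nabla\bar f$ for the gradient with respect to $\bar g$. First I would produce the diffeomorphisms $\psi_\tau$ by solving, in each fiber, the nonautonomous ODE $\partial_\tau\psi_\tau = -\tau^{-1}X\circ\psi_\tau$ with $\psi_1 = \operatorname{id}$. Substituting $s = -\log\tau$ converts this into the autonomous gradient flow $\partial_s\psi = X\circ\psi$, run for $s\in[0,\infty)$ as $\tau$ ranges over $(0,1]$. Since $\bar f$ grows like $\tfrac14 r_c^2$ along the end and $X$ therefore points outward with no critical points near infinity, this flow pushes points toward infinity; hence on a sufficiently distant end $\Ec_{\rad_0}$, identified with its image under the asymptotic chart $\Phi$, the maps $\Psi_\tau\dfn\psi_\tau\circ\Phi$ are well defined for all $\tau\in(0,1]$, are diffeomorphisms onto their images, and sweep out ends of $V$. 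This is assertion (1).

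Next I would verify (2) by direct computation. With $g(\tau)\dfn\tau\Psi_\tau^*\bar g$, the scale-invariance of $\Rc$ gives $\Rc(g) = \Psi_\tau^*\Rc(\bar g)$, and differentiating yields $\partial_\tau g = \tau^{-1}g + \tau\Psi_\tau^*(\Lie_{W_\tau}\bar g)$, where $W_\tau$ generates $\psi_\tau$. Feeding in the soliton equation in the form $2\Rc(\bar g) = \bar g - \Lie_X\bar g$ (from \eqref{eq:shrinker}), one finds $\partial_\tau g - 2\Rc(g) = \Psi_\tau^*(\Lie_{\tau W_\tau + X}\bar g)$, which vanishes precisely because the generator was chosen so that $W_\tau = -\tau^{-1}X$. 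Thus $g(\tau)$ solves the backward Ricci flow \eqref{eq:brf} on $(0,1]$.

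The analytic heart is the convergence $g(\tau)\to g_c$ as $\tau\searrow 0$ together with the uniform estimates \eqref{eq:curvdecay}. The point is that in the $s$-variable the gradient flow of $\bar f$ approximates the dilation $\rho_\lambda$ with $\lambda = \tau^{-1/2}$: since $X = \nabla\bar f\approx\tfrac12 r_c\,\partial_r$ to leading order, flowing for time $s = -\log\tau$ scales radial distance by $e^{s/2} = \tau^{-1/2}$, so that $\tau\Psi_\tau^*\bar g$ is, up to lower-order corrections, exactly $\lambda^{-2}\rho_\lambda^*\Phi^*\bar g$. The defining convergence $\lambda^{-2}\rho_\lambda^*\Phi^*\bar g\to g_c$ then transfers to $g(\tau)\to g_c$. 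To obtain this in $C^\infty$ rather than merely $C^2_{\mathrm{loc}}$, and with the weighted bounds \eqref{eq:curvdecay}, I would invoke the quadratic curvature decay forced on the end (see the references in the introduction). Because $(E, g(\tau))$ is isometric via $\psi_\tau$ to $(\psi_\tau(E), \tau\bar g)$, one has $|\nabla^{(m)}R|_{g(\tau)}(x) = \tau^{-(m+2)/2}|\bar\nabla^{(m)}\bar R|_{\bar g}(\psi_\tau(x))$, and evaluating the decay $|\bar\nabla^{(m)}\bar R|_{\bar g}\lesssim r_c^{-(m+2)}$ at the dilated point $\psi_\tau(x)$, where $r_c\approx\tau^{-1/2}r_c(x)$, produces a compensating factor $\tau^{(m+2)/2}$; the two powers of $\tau$ cancel, leaving $r_c^{m+2}|\nabla^{(m)}R|_{g(\tau)}\lesssim 1$ uniformly in $\tau$.

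Finally, for (4) I would set $f(\tau)\dfn\Psi_\tau^*\bar f$ and transport the soliton identities through the rescaling and pullback. The constraint $R + |\nabla f|^2 = f$ and the traced soliton equation give $\partial_\tau(\tau f) = \Psi_\tau^*(\bar f - |\nabla\bar f|^2) = \tau S$, $\tau^2|\nabla f|^2 - \tau f = \tau\Psi_\tau^*(|\nabla\bar f|^2-\bar f) = -\tau^2 S$, and $\tau\Rc(g) + \tau\nabla\nabla f = \tfrac12 g$, which are the three identities in \eqref{eq:fid1}; the profile $4\tau f\to r_c^2$ and the two-sided bound $r_c^2 - N_0 r_c^{-2}\le 4\tau f\le r_c^2 + N_0 r_c^{-2}$ come from the quadratic-decay control on $\bar f - \tfrac14 r_c^2$ near infinity. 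The principal obstacle is the third step: showing that the gradient flow of $\bar f$ remains well defined and diffeomorphic on a fixed distant end for all $\tau\in(0,1]$ and that the rescaled pullbacks converge smoothly and uniformly up to $\tau = 0$ to $g_c$. This requires precise asymptotics for $\bar f$ and $\nabla\bar f$ and the upgrade from the $C^2_{\mathrm{loc}}$ hypothesis to smooth convergence with the weighted bounds, which is exactly where the quadratic curvature decay of the soliton is indispensable.
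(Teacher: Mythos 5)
The paper itself gives no proof of this proposition---it is imported verbatim as Proposition 2.1 of \cite{KotschwarWangConical}---and your reconstruction follows exactly the argument of that reference: the flow $\partial_\tau\Psi_\tau=-\tau^{-1}\nabla\bar f\circ\Psi_\tau$ normalized at $\tau=1$, the Lie-derivative computation showing $\partial_\tau g-2\Rc(g)=\Psi_\tau^*\bigl(\Lie_{\tau W_\tau+X}\bar g\bigr)=0$, the scaling identity $|\nabla^{(m)}R|_{g(\tau)}(x)=\tau^{-(m+2)/2}|\bar\nabla^{(m)}\bar{R}|_{\bar g}(\Psi_\tau(x))$ combined with quadratic decay, and the pulled-back soliton identities for (4) are all correct and are precisely the steps taken there. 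The parts you defer---upgrading the $C^2_{\mathrm{loc}}$ conical hypothesis to quadratic curvature decay with Shi-type derivative estimates on the (incomplete) end, showing the limit is $g_c$ itself rather than some cone metric differing by a diffeomorphism, and the $O(r_c^{-2})$ asymptotics of $4\tau f$---constitute the technical body of the proof in \cite{KotschwarWangConical}, so your proposal is a faithful skeleton of the cited argument that correctly locates, but does not carry out, the analytic core.
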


We apply the above proposition to a shrinker $(M, \bar{g}, \bar{f})$ asymptotic to a K\"ahler cone $(E_0, g_c)$ along the end $V\subset M$
as in Theorem \ref{thm:kaehlerity}, and assume for the rest of the section that $g = g(\tau)$ is the smooth family of metrics on $E_{R_0}\times [0, 1]$
it provides.  Let $J_c$ denote the complex structure associated to $(E_0, g_c)$, and let $J= J(\tau)$ denote the family of almost-complex structures
obtained as solutions to the fiberwise-ODE \eqref{eq:jev}.  Then define $\Pmh$, $\Am$, $\Bm$, $\Rmh$, and $\Smh$ as above, in terms of the solution $g(\tau)$ and let
\begin{align*}
 \ve{X}(\tau) &= \Rmh(\tau)\oplus \Smh(\tau) \in V \oplus (T^*M\otimes V),\\
 \ve{Y}(\tau) &= \Am(\tau) \oplus \Bm(\tau) \in   (T^*M\otimes V)\oplus (T^2(T^*M)\otimes V)
\end{align*}
where $V = \End(\tf)$. The families $\ve{X}$ and $\ve{Y}$ of sections are smoothly defined on all of $E_{R_0}\times[0, 1]$ and vanish identically on $E_{R_0}\times \{0\}$.

Combining \eqref{eq:curvdecay} with Proposition \ref{prop:sysev}, we see that 
\begin{align}\label{eq:xysys}
\begin{split}
    \left|\pd{\ve{X}}{\tau} + \Delta\ve{X}\right| &\leq \frac{C}{r^2_c}\left(|\ve{X}| + |\ve{Y}|\right)\\
    \left|\pd{\ve{Y}}{\tau}\right| &\leq C\left(|\ve{X}| + |\nabla \ve{X}|\right) + \frac{C}{r_c^2} |\ve{Y}|
\end{split}
\end{align}
for some $C = C(n, K_0)$
on $E_{R_0}\times [0, 1]$. Here $|\cdot| = |\cdot|_{g(\tau)}$, $\nabla = \nabla_{g(\tau)}$, and $\Delta =\Delta_{g(\tau)}$. Moreover, we also have
\[
\sup_{E_{R_0}\times[0, 1]}\{|\ve{X}| + |\nabla \ve{X}| + |\ve{Y}|\} \leq C(n, K_0).
\]
The boundedness of the components of $\ve{X}$ follows directly from \eqref{eq:curvdecay}. For the remaining components, note first that we can
bound $\pdtau\nabla \Pmh$ via \eqref{eq:curvdecay},
and hence also $\Am = \nabla \Pmh$.
With \eqref{eq:curvdecay}, we can then bound the components of $\nabla\ve{X}$. Similarly, one can estimate $\Bm = \nabla\nabla \Pmh$ 
via the equation for $\pdtau \nabla\nabla \Pmh$ and the bound on $\nabla\Pmh$.

The vanishing of $\ve{X}$ and $\ve{Y}$ near infinity is now a consequence of the Carleman
estimates established in Propositions 4.7, 5.7, and 5.9 of \cite{KotschwarWangConical}.
\begin{theorem}[\cite{KotschwarWangConical}]\label{thm:bu}
 Suppose $(M, g(\tau))$ is a self-similar solution to \eqref{eq:brf} on $E_{R_0}\times (0, 1]$ with potential $f$ which
 satisfies the conclusions of Proposition \ref{prop:brf}
 relative to the conical metric $g_c$ and the parameters $K_0$ and $N_0$. Let $\ve{X}$, $\ve{Y}$ be smooth, uniformly bounded families of 
 sections of tensor bundles over $E_{R_0}\times [0, 1]$ with the property that, for any $\epsilon > 0$, there is an $R_1 = R_1(\epsilon) \geq R_0$  such that
\begin{align}\label{eq:xysys2}
\begin{split}
    \left|\pd{\ve{X}}{\tau} + \Delta\ve{X}\right| &\leq \epsilon\left(|\ve{X}| + |\ve{Y}|\right)\\
    \left|\pd{\ve{Y}}{\tau}\right| &\leq C_0\left(|\ve{X}| + |\nabla \ve{X}|\right) + \epsilon|\ve{Y}|
\end{split}
\end{align}
for some constant $C_0 > 0$ on $E_{R_1}\times [0, 1]$.  Then, if $\ve{X}$ and $\ve{Y}$ vanish identically on $E_{R_0}\times\{0\}$,
they vanish identically on $E_{R_2}\times [0, \tau_0]$ for some $R_2 \geq R_1$ and $\tau_0 \in (0, 1)$.
\end{theorem}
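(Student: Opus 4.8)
The plan is to prove this backward uniqueness statement directly through two Carleman inequalities adapted to the self-similar, asymptotically conical setting, precisely as in \cite{KotschwarWangConical}. The guiding principle is that backward uniqueness for a parabolic differential inequality follows from a weighted energy estimate carrying a large parameter, once one can dominate every lower-order and coupling term by the ``good'' terms the estimate produces. What makes a usable weight available here is the structure furnished by Proposition \ref{prop:brf}: the smooth convergence $g(\tau) \to g_c$ as $\tau \searrow 0$, the uniform curvature decay \eqref{eq:curvdecay}, and above all the potential $f$ with $4\tau f \approx r_c^2$ satisfying the soliton identities \eqref{eq:fid1}. I would build the Carleman weight primarily out of $f$ (equivalently, out of $r_c^2/\tau$), so that a single weight controls the behavior both at spatial infinity and near the initial time.

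Because $E_{R_0}$ is noncompact, I would first localize to an annular region $R_1 \leq r_c \leq \Lambda$ and a short time interval $[0, \tau_0]$, letting $\Lambda \to \infty$ at the end. The vanishing of $\ve{X}, \ve{Y}$ on $E_{R_0}\times\{0\}$ together with the convergence as $\tau\searrow 0$ ensures the initial-time boundary term has the favorable sign, while \eqref{eq:curvdecay} and the uniform bounds on $\ve{X}$, $\nabla\ve{X}$, $\ve{Y}$ tame the contributions at $r_c = \Lambda$; the inner cutoff at $r_c = R_1$ is exactly why the conclusion only asserts vanishing on a possibly smaller end $E_{R_2}$. On this region I would establish, for a large parameter $\alpha$, a parabolic Carleman inequality of the schematic form
\[
 \alpha^3\int e^{2\alpha\varphi}\varphi^2|\ve{X}|^2 + \alpha\int e^{2\alpha\varphi}|\nabla\ve{X}|^2 \leq C\int e^{2\alpha\varphi}\left|\pd{\ve{X}}{\tau} + \Delta\ve{X}\right|^2 + (\text{boundary}),
\]
whose essential feature is that it delivers the gradient term $|\nabla\ve{X}|^2$ on the good side. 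For the ODE-type quantity $\ve{Y}$, which satisfies only a first-order inequality in $\tau$, a simpler weighted estimate bounding $\alpha\int e^{2\alpha\varphi}|\ve{Y}|^2$ by $C\int e^{2\alpha\varphi}|\partial_\tau\ve{Y}|^2$ suffices, provided it uses the same weight $\varphi$. Verifying the required convexity of $\varphi$ against the evolving metric $g(\tau)$ — using \eqref{eq:fid1} to absorb the $\tau$-derivatives of the geometry — is the technical heart of both estimates.

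I would then combine the two inequalities. Inserting the hypotheses \eqref{eq:xysys2}, the two right-hand sides are controlled by $\epsilon^2\int e^{2\alpha\varphi}(|\ve{X}|^2 + |\ve{Y}|^2)$ and $C_0^2\int e^{2\alpha\varphi}(|\ve{X}|^2 + |\nabla\ve{X}|^2) + \epsilon^2\int e^{2\alpha\varphi}|\ve{Y}|^2$, respectively. The strategy is to fix $\alpha$ large enough that $\alpha \gg C_0^2$, so the term $C_0^2|\nabla\ve{X}|^2$ — the genuinely dangerous one, since $C_0$ is not small — is absorbed by the gradient term on the good side, while the $|\ve{X}|^2$ contributions are absorbed by the $\alpha^3\varphi^2$ term. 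Only then would I choose $\epsilon$ small, hence $R_1 = R_1(\epsilon)$ large, so that the remaining $\epsilon^2|\ve{X}|^2$ and $\epsilon^2|\ve{Y}|^2$ terms are absorbed as well. What survives forces all the weighted integrals to vanish on $E_{R_2}\times[0,\tau_0]$, and therefore $\ve{X} \equiv \ve{Y} \equiv 0$ there.

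The main obstacle I anticipate is the coupling through $|\nabla\ve{X}|$ in the $\ve{Y}$-inequality, compounded by noncompactness: the parabolic Carleman estimate must be strong enough to yield a gradient term whose coefficient $\alpha$ can be taken larger than the fixed constant $C_0$, and the weight $\varphi$ must be engineered so that this holds while the boundary contributions at infinity stay under control. Reconciling these competing demands — gradient control, convexity against the self-similar flow, and decay at infinity — inside one weight built from the conical potential is where the real difficulty lies.
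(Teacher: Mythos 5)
Your overall framework---Carleman estimates adapted to the self-similar conical background, a weight built from the soliton potential $f$ via \eqref{eq:fid1}, and the absorption scheme in which the gradient coupling $C_0|\nabla\ve{X}|$ is absorbed by taking the Carleman parameter $\alpha$ large and the $\epsilon$-terms are absorbed by taking $R_1(\epsilon)$ large---is indeed the machinery the paper invokes, namely the argument of Sections 4--6 of \cite{KotschwarWangConical}. (The paper does not reprove the theorem; it observes that \eqref{eq:xysys2} and uniform boundedness are the only properties of $(\ve{X},\ve{Y})$ used there, so the argument following (6.1)--(6.2) of that reference applies verbatim.) However, your reconstruction has a genuine gap at the step where you pass to spatial infinity.

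You localize to $R_1\le r_c\le \Lambda$ and claim that the curvature decay \eqref{eq:curvdecay} together with the uniform bounds on $\ve{X}$, $\nabla\ve{X}$, $\ve{Y}$ ``tames'' the boundary contributions at $r_c=\Lambda$, so that a \emph{single} weight suffices as $\Lambda\to\infty$. This cannot work as stated: any weight $e^{2\alpha\varphi}$ strong enough to drive the uniqueness argument---in particular one modeled on $f\approx r_c^2/(4\tau)$---grows at spatial infinity, so the cutoff/boundary terms at $r_c=\Lambda$ carry a factor $e^{2\alpha\varphi(\Lambda,\cdot)}$ that diverges as $\Lambda\to\infty$; mere boundedness of $\ve{X}$, $\nabla\ve{X}$, $\ve{Y}$ neither kills these terms nor even makes the weighted integrals over the unbounded end finite. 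This is precisely the obstruction that forces the proof in \cite{KotschwarWangConical} (following the Escauriaza--Seregin--\v{S}ver\'{a}k scheme) to proceed in \emph{two stages with two distinct families of weights}: first, the Carleman estimates of Propositions 5.7 and 5.9 (heat-kernel-type weights) upgrade the hypothesis $\ve{X}=\ve{Y}=0$ at $\tau=0$ to a quantitative Gaussian-type spatial decay of $\ve{X}$ and $\ve{Y}$ on a short time interval; only with that decay in hand are the integrals in the uniqueness Carleman estimate (Proposition 4.7/4.9, whose weight grows at infinity) convergent and the boundary terms at $r_c=\Lambda$ negligible, after which letting $\alpha\to\infty$ forces vanishing on $E_{R_2}\times[0,\tau_0]$. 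Your closing paragraph correctly identifies the competing demands (gradient control, convexity along the flow, control at infinity), but the resolution is not to engineer one weight meeting all of them---none does; it is to split the argument into a decay stage and a uniqueness stage, each with its own weight.
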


The two sets of Carleman estimates given in Proposition 4.9 and Propositions 5.7 and 5.9 in \cite{KotschwarWangConical} are valid on any asymptotically
conical shrinking self-similar family of background solutions $g(\tau)$, and the argument in Section 6 of that reference can be applied directly
to our setting to combine them to prove the vanishing of $\ve{X}$ and $\ve{Y}$. Although the particular components of the system $(\ve{X}, \ve{Y})$ 
in \cite{KotschwarWangConical} -- the same as that in \cite{KotschwarRFBU, KotschwarFrequency} --
differ from those of the system considered here (and in fact are sections of different bundles), the structural assumptions
in Theorem \ref{thm:bu} are the only properties of the system used in the application of the Carleman inequalities. But for the labeling of some constants, the argument which follows
equations (6.1) and (6.2) in \cite{KotschwarWangConical} can be used here without change.

Once we know that $\Rmh$ and $\nabla \Pmh$ vanish on $E_{R_2}\times[0, \tau_0]$ for some $R_2$ and $\tau_0 > 0$, we can apply Lemma \ref{lem:jext} to conclude
that $g(\tau)$ is K\"ahler with respect to $J_c$ on the same set. Fix $s\in (0, \tau_0]$ and let $W = \Psi_s(E_{R_2})$, and $\Phi = \Psi^{-1}_s: W\to E_{R_2}$,
where $\Psi_s: E_{R_2}\to V$ is the map from Proposition \ref{prop:brf}. Then $\bar{g}|_W = s^{-1}\Phi^*(g(s))$ is K\"ahler relative to $J = \Phi^*J_c$,
and $\Phi$ is the desired biholomorphism.

\section{Global K\"ahlerity in the complete case.}

To finish the proof of Theorem \ref{thm:kaehlerity}, we now argue that the K\"ahler structure defined on the end $W\subset M$ must
extend to all of $M$ when $(M, g)$ is complete. The real-analyticity of Ricci solitons provides almost all that we need. 

\begin{lemma}\label{lem:genext}
  Suppose $(M, g)$ is a complete connected real-analytic manifold, $E\subset M$ is open and connected,
  and $p\in E$.
  If $\pi_1(E, p) \hookrightarrow \pi_1(M, p)$ is surjective, then any parallel complex structure $J_E$ on $E$ may be extended uniquely
  to a parallel complex structure $J$ on $M$. 
\end{lemma}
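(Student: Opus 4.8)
The plan is to build $J$ by parallel transport of the single endomorphism $J_0 \dfn J_E(p)\in\operatorname{End}(T_pM)$ and to show that the hypotheses make this globally well-defined. For $q\in M$ and a piecewise-smooth path $\gamma$ from $p$ to $q$, set $J(q)\dfn P_\gamma\circ J_0\circ P_\gamma^{-1}$, where $P_\gamma$ denotes $g$-parallel transport along $\gamma$. Parallel transport along a path is always defined (its defining ODE is linear on the compact domain $[0,1]$), and the resulting $J$ is parallel by construction, satisfies $J^2=-\operatorname{Id}$ because $P_\gamma$ conjugates $J_0^2=-\operatorname{Id}$, and restricts to $J_E$ on $E$ once well-definedness is known (both are parallel and agree at $p$, and $E$ is connected). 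The entire content of the lemma is therefore that $J(q)$ is independent of the choice of $\gamma$; equivalently, that $J_0$ is fixed, under conjugation, by the full holonomy group $\operatorname{Hol}_p(M)$ of $(M,g)$ based at $p$. Uniqueness is then immediate, since any parallel extension agrees with $J_E$ at $p$ and is determined everywhere by transport of its value there.

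To verify the invariance of $J_0$ I would decompose $\operatorname{Hol}_p(M)$ into its restricted (identity-component) part and the discrete part measured by $\pi_1$. Since $J_E$ is parallel on $E$, transporting $J_0$ around any loop contained in $E$ returns $J_0$, so $J_0$ is invariant under $\operatorname{Hol}_p(E)$, the holonomy group of loops lying in $E$. For the restricted holonomy I would invoke the real-analyticity of $(M,g)$: for an analytic metric the restricted holonomy algebra $\mathfrak{hol}^0_p$ coincides with the infinitesimal holonomy algebra at $p$, spanned by the curvature operator and its iterated covariant derivatives evaluated at $p$. This is a pointwise, local invariant, hence identical whether computed in $M$ or in the open neighborhood $E$ of $p$, giving $\mathfrak{hol}^0_p(M)=\mathfrak{hol}^0_p(E)$. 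Because $J_0$ commutes with $\mathfrak{hol}^0_p(E)$ and this algebra generates the connected group $\operatorname{Hol}^0_p(M)$ via the exponential map, the restricted holonomy fixes $J_0$.

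It remains to treat the discrete part, and here the $\pi_1$-hypothesis enters. The quotient $\operatorname{Hol}_p(M)/\operatorname{Hol}^0_p(M)$ is a homomorphic image of $\pi_1(M,p)$ under $[\gamma]\mapsto P_\gamma\operatorname{Hol}^0_p(M)$, so $\operatorname{Hol}_p(M)$ is generated by $\operatorname{Hol}^0_p(M)$ together with the transports around loops representing generators of $\pi_1(M,p)$. By the surjectivity of $\pi_1(E,p)\hookrightarrow\pi_1(M,p)$, every class in $\pi_1(M,p)$ has a representative lying entirely in $E$, and the transport around such a loop fixes $J_0$ by the parallelism of $J_E$ on $E$. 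Thus both families of generators fix $J_0$, so all of $\operatorname{Hol}_p(M)$ does, and $J$ is well-defined. (Since $P_\gamma$ is a $g$-isometry and $g_p$ is Hermitian with respect to $J_0$, the extended $J$ also keeps $g$ Hermitian, so the K\"ahler structure on $E$ genuinely propagates to all of $M$.) The only real obstacle is the identification $\mathfrak{hol}^0_p(M)=\mathfrak{hol}^0_p(E)$: without analyticity, a null-homotopic loop in $M$ that leaves $E$ could contribute holonomy not detected by the curvature data at $p$, and it is precisely the analyticity of the soliton metric that excludes this.
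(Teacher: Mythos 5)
Your proposal is correct and follows essentially the same route as the paper's proof: define $J$ by parallel transport of $J_E(p)$, use the surjectivity of $\pi_1(E,p)\to\pi_1(M,p)$ to reduce an arbitrary holonomy element to (an element of the restricted holonomy group) composed with (transport around a loop in $E$), and invoke Nomizu's theorem for real-analytic metrics to see that the restricted holonomy fixes $J_E(p)$. The only cosmetic difference is that you quote Nomizu at the level of the infinitesimal holonomy algebra ($\mathfrak{hol}^0_p(M)=\mathfrak{hol}^0_p(E)$, exponentiated to the connected group), whereas the paper phrases the same point as the coincidence of local and reduced holonomy, applied to an explicit path decomposition $P_\gamma = P_\sigma P_\beta P_\alpha$.
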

\begin{proof}
Let $\hat{J} = J_E(p)$.  The uniqueness of any extension $J$ of $J_{E}$ to $M$ is clear, since, given $q\in M$
we must have
\begin{equation}\label{eq:jext}
J(q) = P_{\gamma}\hat{J}P_{\gamma}^{-1}
\end{equation}
where $P_\gamma:T_pM \to T_qM$
is parallel translation along any piecewise smooth path $\gamma$ from $p$ to $q$. On the other hand, we can also use \eqref{eq:jext} to define $J(q):T_qM\to T_qM$. This will yield a smooth parallel complex structure on all of $M$
provided the extension can be shown to be independent of the path $\gamma$. 

Suppose $\sigma$ is any other path
from $p$ to $q$. Under our assumption on $\pi_1(E)$, the loop $\gamma \cdot \bar{\sigma}$, where $\bar{\sigma}$ 
denotes the reverse parametrization of $\sigma$ from $q$ to $p$ along $\sigma$, 
is homotopic to some loop $\alpha$ contained entirely in $E$.
Then
\[
 P_{\gamma} = P_{\sigma} P_{\bar{\sigma}} P_{\gamma} P_{\bar{\alpha}} P_{\alpha} 
 = P_{\sigma} P_{\beta}P_{\alpha}
\]
where $\beta = \bar{\alpha}\cdot \gamma\cdot \bar{\sigma}$ is a null-homotopic loop based at $p$.

Since $(M, g)$ is real-analytic, the local holonomy of $g$ at $p$ coincides with the reduced holonomy (see, e.g., \cite{Nomizu}), so 
both $\operatorname{Hol}^0_p(M, g)$ and $\operatorname{Hol}_p(E, g)$ leave $\hat{J}$ invariant. Therefore 
$P_{\beta} \hat{J} P_{\beta}^{-1} = P_{\alpha} \hat{J} P_{\alpha}^{-1}= \hat{J}$,
and so $P_{\gamma}\hat{J}P_{\gamma}^{-1} = P_{\sigma}\hat{J}P_{\sigma}^{-1}$
as desired. 
Of course, the restriction of $J_E$ to any path contained in $E$ must coincide with the parallel translation of $\hat{J}$ along that path,
so the connectedness of $E$ implies that $J|_E = J_E$.
\end{proof}

We now give an ad hoc argument to show that the K\"ahler structure extends to the entire manifold. The key ingredient is
the result of Munteanu-Wang \cite{MunteanuWangKaehler} which guarantees
that any complete K\"ahler shrinking gradient Ricci soliton is connected at infinity. We apply this to the universal cover of $M$
to determine that the preimage of any end in $M$ is connected in the universal cover.

\begin{lemma}\label{lem:globallyk}
Suppose $(M, g, f)$ is a complete noncompact gradient Ricci soliton and $g|_E$ is K\"ahler 
on some end $E\subset M$ with complex structure $J_E$. Then $J_E$ extends uniquely to a complex structure $J$ on $M$	
relative to which $g$ is K\"ahler.
\end{lemma}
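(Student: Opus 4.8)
The plan is to deduce Lemma~\ref{lem:globallyk} from Lemma~\ref{lem:genext} by arranging that the hypotheses of the latter are met. Lemma~\ref{lem:genext} requires two things: that $(M,g)$ be real-analytic, and that the inclusion $\pi_1(E,p)\hookrightarrow \pi_1(M,p)$ be surjective. The first is immediate, since shrinking gradient Ricci solitons are real-analytic in harmonic coordinates (by the results of Bando or Ivey on the analyticity of Ricci solitons). The real work is to verify the $\pi_1$-surjectivity hypothesis, and for this the intended tool is the Munteanu--Wang connectedness-at-infinity result applied on the universal cover.

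First I would pass to the universal cover $\pi:(\tilde M,\tilde g)\to (M,g)$, which carries its own shrinker structure $(\tilde M,\tilde g,\tilde f)$ with $\tilde f = f\circ\pi$. The preimage $\tilde E = \pi^{-1}(E)$ is an open subset on which $\tilde g$ is K\"ahler relative to $\tilde J_E = \pi^*J_E$; since $E$ is an end, each component of $\tilde E$ is an end of $\tilde M$, and $\tilde g$ restricted to each such component is K\"ahler. The key point is that $\tilde J_E$ is a parallel complex structure on $\tilde E$, and by Lemma~\ref{lem:genext} applied on a single component $\tilde E_0$ of $\tilde E$ it extends to a parallel complex structure $\tilde J$ on all of $\tilde M$ (here the $\pi_1$-hypothesis is vacuous because $\tilde M$ is simply connected, so any component of $\tilde E_0$ satisfies surjectivity trivially). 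Thus $(\tilde M,\tilde g)$ is globally K\"ahler, and being a complete K\"ahler shrinker, it is connected at infinity by \cite{MunteanuWangKaehler}.

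Next I would exploit this connectedness to establish the $\pi_1$-surjectivity I actually need downstairs. Connectedness at infinity of $\tilde M$ means that outside any sufficiently large compact set $\tilde M$ has a single unbounded component; equivalently, $\tilde E$ is (eventually) connected. The deck group $\Gamma = \pi_1(M,p)$ acts on $\tilde E$, and since there is essentially one end, the stabilizer structure forces the covering $\tilde E \to E$ to ``see'' all of $\Gamma$: concretely, a loop $\gamma$ in $M$ based at $p\in E$ lifts to a path in $\tilde M$ whose endpoints both lie in the connected set $\tilde E$, so they may be joined by a path in $\tilde E$, which projects to a loop in $E$ homotopic to $\gamma$. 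This is precisely the statement that $\pi_1(E,p)\to\pi_1(M,p)$ is surjective. With surjectivity in hand, Lemma~\ref{lem:genext} applies directly on $M$ to extend $J_E$ to a parallel $J$; since $g$ is Hermitian with respect to $J_E$ on $E$ and $J$ is parallel, $g$ is K\"ahler with respect to $J$ on all of $M$, and uniqueness is part of Lemma~\ref{lem:genext}.

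The main obstacle is the middle step: translating ``$\tilde M$ is connected at infinity'' into ``$\pi_1(E)\to\pi_1(M)$ is surjective.'' One must be careful that $E$ is an end (a component of the complement of a \emph{compact} set) while connectedness at infinity is a statement about the complement of \emph{large} compacta, so the path-lifting argument should be carried out after enlarging the compact set past which $\tilde M$ is connected, and one should check that the relevant component of $\tilde E$ maps onto $E$ and that its endpoints can indeed be joined within $\tilde E$ rather than merely within $\tilde M$. A clean way to organize this is to argue that any lift $\tilde E$ of $E$ meeting a fixed lift $\tilde p$ of $p$ is connected and $\Gamma$-cofinal, so that every deck transformation is realized by a loop in $E$; I expect this verification, rather than the applications of Lemmas~\ref{lem:genext} and the analyticity, to be where the care is required.
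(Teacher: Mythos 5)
Your overall route is the same as the paper's: lift to the universal cover, extend the complex structure over $\tilde M$ using simple connectivity and real-analyticity, invoke \cite{MunteanuWangKaehler} to conclude $\tilde M$ is connected at infinity, deduce that $\pi_1(E,p)\to\pi_1(M,p)$ is surjective, and finish with Lemma \ref{lem:genext}. The first and last steps are fine (the paper extends over $\tilde M$ starting from a small evenly covered neighborhood rather than from a whole component of $\pi^{-1}(E)$, but that difference is immaterial), and your path-lifting argument correctly shows that surjectivity follows \emph{once one knows} that $\pi^{-1}(E)$ is connected.

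However, the step you yourself flag as delicate is a genuine gap, and it cannot be closed with the ingredients you have assembled: the implication ``$\tilde M$ connected at infinity $\Rightarrow$ $\pi^{-1}(E)$ connected (equivalently, $\pi_1(E)\to\pi_1(M)$ surjective)'' is false in general. Connectedness at infinity concerns complements of \emph{compact} subsets of $\tilde M$, whereas $\pi^{-1}(E)$ is (essentially) the complement of $\pi^{-1}(D)$, and $\pi^{-1}(D)$ is compact only when the covering is finite. A concrete counterexample to the purely topological implication you want: for a complete hyperbolic once-punctured torus $M$, the universal cover is $\mathbb{H}^2$, which is connected at infinity, yet the preimage of a cusp neighborhood $E$ is an infinite disjoint union of horoballs, and $\pi_1(E)\cong\mathbb{Z}\to\pi_1(M)\cong F_2$ is far from surjective; your proposed ``$\Gamma$-cofinal'' organizing claim is exactly what fails here. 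The missing ingredient, which the paper invokes at precisely this point, is Wylie's theorem \cite{Wylie} that a complete shrinking soliton has \emph{finite} fundamental group. This makes $\pi:\tilde M\to M$ proper, which does two things: it shows $M$ itself is connected at infinity, so one may assume $E=M\setminus D$ with $D$ compact; and it makes $\tilde D=\pi^{-1}(D)$ compact, so connectedness at infinity applies to $\tilde M\setminus\tilde D=\pi^{-1}(E)$. Since every component of $\pi^{-1}(E)$ covers the unbounded set $E$ and hence is unbounded (the paper argues instead that the deck group permutes these components, so they are pairwise isometric), the uniqueness of the unbounded component forces $\pi^{-1}(E)$ to be connected. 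With that, your argument goes through; without the finiteness of $\pi_1(M)$, it does not.
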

\begin{proof}
The soliton structure $(M, g, f)$ lifts to  a complete soliton structure $(\tilde{M}, \tilde{g}, \tilde{f})$ on the universal cover $\pi:\tilde{M}\to M$. 
Fix $p\in E$ and a local section 
$\sigma: U\to \tilde{M}$
of $\pi$ over some connected evenly covered neighborhood $U$ of $p$ contained in $E$. Then $J_{\tilde{U}} = (\sigma^{-1})^*J_E$ is a parallel complex structure 
on $\tilde{U} = \sigma(U)$ relative
to which $\tilde{g}|_{\tilde{U}}$ is Hermitian. As Ricci solitons, both $(M, g)$ and $(\tilde{M}, \tilde{g})$ are real-analytic manifolds \cite{IveySolitons}. 
The simple-connectivity of $\tilde{M}$
then implies (as above) that $J_{\tilde{U}}$ extends to a parallel complex structure $\tilde{J}$ on $\tilde{M}$ relative to which $\tilde{g}$ is Hermitian \cite{Nomizu}.
So $(\tilde{M}, \tilde{g}, \tilde{f})$ is a complete K\"ahler gradient Ricci soliton and $\tilde{M}$ must therefore be connected at infinity \cite{MunteanuWangKaehler}. 
But the fundamental group of a complete shrinking soliton is finite \cite{Wylie}, so $\pi:\tilde{M}\to M$ is proper, and $M$ must also be connected at infinity. 
So we may assume that $E = M\setminus D$ for some compact $D\subset M$.

Now we claim that $\tilde{E} = \pi^{-1}(E)$ is connected in $\tilde{M}$. 
Since $\pi$ is proper, $\tilde{D} = \pi^{-1}(D)$
is compact. The set $\tilde{E} = \tilde{M}\setminus \tilde{D}$ therefore must have a unique unbounded connected component $\tilde{C}$. However,
any two of these connected components must be isometric, so every component of $\tilde{E}$ would have to be unbounded as well.
So $\tilde{E} =\tilde{C}$ is connected. This implies that
$\pi_1(E, p)\hookrightarrow \pi_1(M, p)$ is surjective, and the existence of a unique global extension $J$ of $J_E$ follows from Lemma \ref{lem:genext} above.
Since $g$ and $J$ are both parallel, and $g$ is Hermitian relative to $J$ on $E$, it follows that $g$ is Hermitian relative to $J$ everywhere on $M$.
\end{proof}

 The reasoning in Lemmas \ref{lem:genext} and \ref{lem:globallyk} can be applied to other geometric structures which possess similar continuation properties in the real-analytic setting.
 For example, together with the continuation argument from Theorem 6.3, Chapter VI of \cite{KobayashiNomizu}, it can be applied to promote any isometry between the ends
 of two asymptotically conical K\"ahler shrinkers into a global isometry between the shrinkers. With the uniqueness theorem in \cite{KotschwarWangConical}, it follows then, for example, that any two complete noncompact K\"ahler shrinkers
 $(M, g, f)$
 and $(\hat{M}, \hat{g}, \hat{f})$ which are asymptotic to the same cone (along their unique ends) must be \emph{globally} isometric.

\begin{acknowledgement*} The author would like to thank Ben Chow and Will Wylie for useful discussions, and to acknowledge the substantial contribution of Lu Wang
to this work through her joint project \cite{KotschwarWangConical} with the author.
\end{acknowledgement*}

\end{document}